\let\c@table\c@figure
\tikzstyle{ball}=[circle, draw, inner sep=1pt, color=black]
\tikzstyle{open_y}=[ball,fill=white]%, label=right:{$y$}]
\tikzstyle{open}=[ball,fill=white]
\tikzstyle{open2}=[ball,fill=white,minimum size=.15cm]
\tikzstyle{closed}=[ball,fill=black]%, label=right:{$x$}]
\tikzstyle{zball}=[ball,rectangle,minimum size=.1cm,fill=white]%, label=right:{$z$}]
\definecolor{lightgray}{rgb}{0.8, 0.8, 0.8}
\definecolor{darkgray}{rgb}{0.7, 0.7, 0.7}
\definecolor{darkblue}{rgb}{0, 0, .4}
\newcounter{todocounter}
\theoremstyle{plain}
\newtheorem{theorem}{Theorem}[section]
\newtheorem{proposition}[theorem]{Proposition}
\newtheorem{fact}[theorem]{Fact}
\newfont{\footsc}{cmcsc10 at 8truept}
\newfont{\footbf}{cmbx10 at 8truept}
\newfont{\footrm}{cmr10 at 10truept}
\renewenvironment{abstract}%
                {
                  \begin{list}{}%
                     {\setlength{\rightmargin}{1in}%
                      \setlength{\leftmargin}{1in}}%
                   \item[]\ignorespaces\begin{small}}%
                 {\end{small}\unskip\end{list}}
\newcommand{\Av}{\operatorname{Av}}
\newcommand{\ds}{\displaystyle}
\newcommand{\pa}[1]{\left({#1}\right)}
\newcommand{\f}[2]{\ds\frac{{#1}}{{#2}}}
\newcommand{\C}{\mathcal{C}}
\newcommand{\gr}{\mathrm{gr}}
\newcommand{\lgr}{\underline{\mathrm{gr}}}
\newcommand{\ugr}{\overline{\mathrm{gr}}}
\renewcommand{\a}{\mathsf{a}}
\renewcommand{\b}{\mathsf{b}}
\renewcommand{\c}{\mathsf{c}}
\renewcommand{\d}{\mathsf{d}}
\newcommand{\OEISlink}[1]{\href{http://oeis.org/#1}{#1}}
\newcommand{\OEISref}{\href{http://oeis.org/}{OEIS}~\cite{sloane:the-on-line-enc:}}
\newcommand{\OEIS}[1]{sequence \OEISlink{#1} in the \OEISref}
\newcommand{\eval}[2][\right]{\relax\ifx#1\right\relax \left.\fi#2#1\rvert}
\newcommand{\st}{\::\:}
\newcommand{\fp}{\operatorname{fp}}
\newcommand{\lrmin}{\operatorname{lrmin}}
\newcommand{\rlmax}{\operatorname{rlmax}}
\title{\sc Pattern-Avoiding Involutions: Exact and Asymptotic Enumeration}
\author{%
Mikl\'os B\'ona, Cheyne Homberger, Jay Pantone, and Vincent Vatter\footnotemark[\value{footnote}]\footnote{Vatter's research was sponsored by the National Security Agency under Grant Number H98230-12-1-0207 and the National Science Foundation under Grant Number DMS-1301692.  The United States Government is authorized to reproduce and distribute reprints not-withstanding any copyright notation herein.}\\[-0.25ex]
\small Department of Mathematics\\[-0.5ex]
\small University of Florida\\[-0.5ex]
\small Gainesville, Florida USA\\[-1.5ex]
}
\date{}
\begin{document}
\maketitle

\pagestyle{main}

\begin{abstract}
We consider the enumeration of pattern-avoiding involutions, focusing in particular on sets defined by avoiding a single pattern of length $4$. As we demonstrate, the numerical data for these problems exhibits some surprising behavior. This strange behavior even provides some very unexpected data related to the number of $1324$-avoiding permutations.
\end{abstract}

\section{Introduction}

For the past twenty-five years, there has been considerable interest in the enumeration of pattern-avoiding permutations. Much less work has been devoted to pattern-avoiding \emph{involutions}, the topic of this paper. We begin with preliminary definitions.

Given permutations $\pi$ and $\sigma$, considered as sequences of positive integers (one-line notation), we say that $\pi$ \emph{contains} $\sigma$, and write $\sigma\le\pi$, if $\pi$ has a subsequence $\pi(i_1)\cdots\pi(i_k)$ of the same length as $\sigma$ which is order isomorphic to $\sigma$ (i.e.,~$\pi(i_s) < \pi(i_t)$ if and only if $\sigma(s) < \sigma(t)$ for all $1\le s,t\le k$); otherwise, we say that $\pi$ \emph{avoids} $\sigma$. For example, $\pi=391867452$ contains $\sigma=51342$, as can be seen by considering the subsequence $\pi(2)\pi(3)\pi(5)\pi(6)\pi(9)=91672$.

Containment is a partial order on permutations, and we refer to downsets of permutations as \emph{permutation classes}. Thus if $\C$ is a permutation class containing $\pi$ and $\sigma\le\pi$ then $\sigma\in\C$. Given a set $B$ of permutations, we denote by $\Av(B)$ the class of permutations defined by avoiding every permutation in $B$, i.e.,
\[
\Av(B)=\{\pi\st\text{$\pi$ avoids every $\beta\in B$}\}.
\]
Conversely, for every class $\C$ there is a unique antichain $B$ such that $\C=\Av(B)$, which is called the \emph{basis} of the class.

For any permutation class $\C$, we denote by $\C_n$ the subset of permutations in $\C$ of length $n$. The generating function (by length) of $\C$ is then
\[
\sum_{n\ge 1} |\C_n|x^n=\sum_{\pi\in\C} x^{|\pi|}.
\]
(Our generating functions do not count the empty permutation.) Two permutation classes with the same enumerations are said to be \emph{Wilf-equivalent}.

In this paper we are interested in counting pattern-avoiding \emph{involutions}. Thus adapting our notation from permutation classes we write
\[
\Av^I(B)=\{\text{involutions }\pi\st \text{$\pi$ avoids every $\beta\in B$}\},
\]
however, two import caveats should be made. The first is that $\Av^I(B)$ is \emph{not} a permutation class in general. Also, note that the choice of $B$ is not unique. We define the generating functions and the notion of Wilf-equivalence for sets sets of the form $\Av^I(B)$ as we did for permutation classes.

The case where $B$ is a singleton has received considerable attention; we call such classes \emph{principal}. Much of the early work in the area of permutation patterns concerned principal classes for short patterns $\beta$. For $|\beta|=3$, there are only two different permutation classes up to symmetry, and both are well-known to be counted by the Catalan numbers (for $\beta=123$, it can be argued that this is due to MacMahon~\cite[Volume I, Section III, Chapter V]{macmahon:combinatory-ana:}, while the $\beta=231$ case was first studied by Knuth~\cite[Section 2.2.1, Exercises 4 and 5]{knuth:the-art-of-comp:1}).

The enumeration of sets of involutions avoiding a pattern $\beta$ of length $3$ was first considered in the seminal paper of Simion and Schmidt~\cite{simion:restricted-perm:}. They showed that for $\beta\in\{123,132,213,321\}$,
	\[
	|\Av^I_n(\beta)|={n\choose \lfloor n/2\rfloor},
	\]
while for $\beta\in\{231,312\}$,
	\[
	|\Av^I_n(\beta)|=2^{n-1}.
	\]

The situation gets much more complicated when $|\beta|=4$. In this case, it follows from the work of Stankova~\cite{stankova:forbidden-subse:} and Backelin, West, and Xin~\cite{backelin:wilf-equivalenc:} that there are three Wilf-equivalence classes for permutations, represented by
\[
\Av(1342),\ \Av(1234),\ \text{and}\ \Av(1324).
\]
The class $\Av(1342)$ was first enumerated by B\'ona~\cite{bona:exact-enumerati:}, who showed that it has an algebraic generating function. (Recently, a much simpler proof has been given by Bloom and Elizalde~\cite{bloom:pattern-avoidan:}.) The class $\Av(1234)$ was first enumerated by Gessel~\cite{gessel:symmetric-funct:}, who showed that it has a $D$-finite but nonalgebraic generating function. Unlike the other two classes, $\Av(1324)$ has resisted all attempts to determine its exact enumeration.

\begin{table}
$$
\begin{array}{ccccccccc}%{|c|c|c|c|c|c|c|c|c|}
\hline
&&&&&&&&\\[-10pt]
&\bm{1324}&\bm{1234}&\bm{4231}&\bm{2431}&\bm{1342}&\bm{2341}&\bm{3421}&\bm{2413}\\[1pt]\hline
&&&&&&&&\\[-11pt]
|\Av^I_{5}(\beta)|&21&21&21&24&24&25&25&24\\[1pt]\hline
&&&&&&&&\\[-11pt]
|\Av^I_{6}(\beta)|&51&51&51&62&62&66&66&64\\[1pt]\hline
&&&&&&&&\\[-11pt]
|\Av^I_{7}(\beta)|&126&127&128&154&156&170&173&166\\[1pt]\hline
&&&&&&&&\\[-11pt]
|\Av^I_{8}(\beta)|&321&323&327&396&406&441&460&456\\[1pt]\hline
&&&&&&&&\\[-11pt]
|\Av^I_{9}(\beta)|&820&835&858&992&1040&1124&1218&1234\\[1pt]\hline
&&&&&&&&\\[-11pt]
|\Av^I_{10}(\beta)|&2160&2188&2272&2536&2714&2870&3240&3454\\[1pt]\hline
&&&&&&&&\\[-11pt]
|\Av^I_{11}(\beta)|&5654&5798&6146&6376&7012&7273&8602&9600\\[1pt]\hline
\end{array}
$$
\caption{The enumerations of involutions avoiding a pattern $\beta$ of length $4$ for $n=5$, $\dots$, $11$, as presented by Jaggard~\cite{jaggard:prefix-exchangi:}.}
\label{table-enum-1}
\end{table}

Extending the work of Guibert~\cite{guibert:combinatoire-de:} and Guibert, Pergola, and Pinzani~\cite{guibert:vexillary-invol:}, Jaggard~\cite{jaggard:prefix-exchangi:} completed the classification of Wilf-equivalence classes of sets of involutions avoiding a pattern of length $4$. Of these eight Wilf-classes, only two enumerations have been computed so far. Gessel~\cite{gessel:symmetric-funct:} showed that $\Av^I(1234)$ is counted by the Motzkin numbers, while Brignall, Huczynska, and Vatter~\cite{brignall:simple-permutat:} counted $\Av^I(2413)$. It should be remarked that presenting such sets as ``principal'' is a bit disingenuous; any involution which avoids $2413$ must also avoid $2413^{-1}=3142$, so $\Av^I(2413)=\Av^I(2413,3142)$. Still, we keep with the established tradition and present the (or more accurately, one of the) shortest possible bases for such sets.

Jaggard concluded his paper by presenting the first few terms of the enumerations of these Wilf-equivalence classes, which we show in Table~\ref{table-enum-1}. The order of the columns of this table is determined by the number of permutations of length $11$ which avoid each pattern. Our interest in the topic of pattern-avoiding involutions was first piqued when we noticed that this ordering \emph{is incredibly misleading} (through no fault of Jaggard's).

In the next section we discuss the asymptotic enumeration of pattern-avoiding permutations and involutions and show why the order of the columns of Table~\ref{table-enum-1} must be incorrect (for large values of $n$). Then in Section~\ref{sec-simples} we give an overview of the method we use to provide two new enumerations --- those of $\Av^I(1342)$ and $\Av^I(2341)$. Section~\ref{sec-123} contains some rather technical calculations which we will need to prove these results while the results themselves are proved in Sections~\ref{sec-1342} and \ref{sec-2341}.

\section{Growth Rates and the Deceptiveness of Table~\ref{table-enum-1}}\label{section:grs}

To explain why the ordering of the columns of Table~\ref{table-enum-1} must be incorrect, we want to look at the asymptotic, rather than exact, enumeration of such sets. The Marcus-Tardos Theorem~\cite{marcus:excluded-permut:} (formerly the Stanley-Wilf Conjecture) states that all permutation classes other than the class of all permutations have at most exponential growth, i.e., for every class $\C$ with a nonempty basis, there is a constant $K\ge 0$ so that $\C$ contains at most $K^n$ permutations of length $n$ for all $n$. Thus every nondegenerate permutation class $\C$ has finite {\it upper\/} and {\it lower growth rates\/} defined, respectively, by
\[
	\ugr(\C) = \limsup_{n\rightarrow\infty}\sqrt[n]{|\C_n|}
		\qquad \text{ and } \qquad
	\lgr(\C) = \liminf_{n\rightarrow\infty}\sqrt[n]{|\C_n|}.
\]
It is conjectured that every permutation class has a {\it proper growth rate\/}, and when we are dealing with a class for which $\ugr(\C)=\lgr(\C)$, we denote this quantity by $\gr(\C)$. Clearly, sets of the form $\Av^I(B)$ have analogous upper and lower growth rates, which we denote similarly, and if these two quantities agree, we call that quantity the proper growth rate of the set.

Arratia~\cite{arratia:on-the-stanley-:} showed that principal classes always have proper growth rates, which are in this case sometimes called \emph{Stanley-Wilf limits}. We briefly recount his proof now, partly because we need to use this machinery in our proofs. The \emph{direct sum} (or just \emph{sum} for short) of the permutations $\sigma$ of length $m$ and $\tau$ of length $n$ is the permutation $\sigma\oplus\tau$ defined by
	\[
	(\sigma\oplus\tau)(i) =
	\left\{
	\begin{array}{ll}
	\sigma(i)&\mbox{for $1\le i\le m$,}\\
	\tau(i-m)+m&\mbox{for $m+1\le i\le m+n$.}
	\end{array}
	\right.
	\]
There is also an obvious symmetry of the sum operation called \emph{skew sum}; both of these operations are shown in Figure~\ref{fig-sums}. The permutation $\pi$ is said to be \emph{sum (resp., skew) indecomposable} if it cannot be expressed as a sum (resp., skew sum) of two proper subpermutations.

\begin{figure}
\begin{center}
	$\pi\oplus\sigma=$
	\begin{tikzpicture}[scale=0.5, baseline=(current bounding box.center)]
		\draw (0,0) rectangle (1,1);
		\draw (1,1) rectangle (2,2);
		\node at (0.5,0.5) {$\pi$};
		\node at (1.5,1.5) {$\sigma$};
	\end{tikzpicture}
\quad\quad\quad\quad
	$\pi\ominus\sigma=$
	\begin{tikzpicture}[scale=0.5, baseline=(current bounding box.center)]
		\draw (0,1) rectangle (1,2);
		\draw (1,0) rectangle (2,1);
		\node at (0.5,1.5) {$\pi$};
		\node at (1.5,0.5) {$\sigma$};
	\end{tikzpicture}
\end{center}
\caption{The sum and skew sum operations.}
\label{fig-sums}
\end{figure}

The permutation class $\C$ is said to be \emph{sum closed} if $\sigma\oplus\tau\in\C$ for every $\sigma,\tau\in\C$ (the term \emph{skew closed} is defined analogously). It is not hard to see that the class $\Av(B)$ is sum (resp., skew) closed if and only if every permutation $\beta\in B$ is sum (resp., skew) indecomposable. Note that every permutation is either sum or skew indecomposable. Therefore, a principal class must be either sum or skew closed. 

It is then easy to see that principal classes have proper growth rates. Suppose that $\C$ is a principal class. By symmetry we may assume that $\C$ is sum closed. Therefore the sum operation defines an injection
	\[
	\oplus\st\C_m\times\C_n\rightarrow\C_{m+n}.
	\]
Therefore the sequence $\{|\C_n|\}$ is supermultiplicative, i.e., $|\C_{m+n}|\ge|\C_m||\C_n|$. It then follows from Fekete's Lemma that the growth rate of $\C$ exists (though Fekete's Lemma allows the limit to be infinite, this possibility is ruled out by the Marcus-Tardos Theorem).

For counting involutions, we cannot make such a strong claim. Indeed, $1\ominus 12=312$ is not an involution, so no nontrivial sets of the form $\Av^I(B)$ are skew closed. However, it is still true that $\sigma\oplus\tau$ is an involution whenever both $\sigma$ and $\tau$ are, so we can get roughly half of Arratia's result:

\begin{proposition}
\label{prop-gr-involution}
If every permutation in $B$ is sum indecomposable then $\Av^I(B)$ has a proper growth rate.
\end{proposition}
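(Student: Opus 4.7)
The plan is to mirror Arratia's argument, using the fact that the direct sum of two involutions is again an involution. The key observation I would establish first is that $\oplus$ restricts to an operation on involutions: if $\sigma$ and $\tau$ are involutions of lengths $m$ and $n$, then $(\sigma\oplus\tau)^2$ acts as $\sigma^2$ on the first $m$ positions and as $\tau^2$ (appropriately shifted) on the remaining $n$, so $\sigma\oplus\tau$ is an involution of length $m+n$.

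Next I would verify that $\Av^I(B)$ is closed under this operation under the sum-indecomposability hypothesis. The excerpt already notes that $\Av(B)$ is sum closed whenever every $\beta\in B$ is sum indecomposable; the reason is that any occurrence of a sum-indecomposable pattern $\beta$ inside $\sigma\oplus\tau$ must lie entirely in the $\sigma$-block or entirely in the $\tau$-block, since the entries of $\sigma\oplus\tau$ split as a sum across the two blocks. Combined with the previous paragraph, this gives an injection
\[
\oplus \st \Av^I_m(B)\times \Av^I_n(B) \longrightarrow \Av^I_{m+n}(B),
\]
and in particular the sequence $a_n=|\Av^I_n(B)|$ satisfies $a_{m+n}\ge a_m a_n$.

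I would conclude by invoking Fekete's Lemma on the supermultiplicative sequence $\{a_n\}$, which guarantees that $\lim_{n\to\infty}\sqrt[n]{a_n}$ exists in $[0,\infty]$. Finiteness is immediate from the Marcus--Tardos Theorem applied to the ambient permutation class $\Av(B)\supseteq\Av^I(B)$, which forces $a_n\le K^n$ for some constant $K$. Hence $\ugr(\Av^I(B))=\lgr(\Av^I(B))$, i.e., $\Av^I(B)$ has a proper growth rate.

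There is no real obstacle here; the argument is essentially a one-line adaptation of Arratia's proof, and the only thing to check with any care is the bookkeeping that $\sigma\oplus\tau$ actually inherits the involution property and that any forbidden pattern inside $\sigma\oplus\tau$ cannot straddle the two blocks.
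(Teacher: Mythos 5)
Your proof is correct and is essentially the paper's own argument: the paper derives this proposition exactly by noting that $\sigma\oplus\tau$ is an involution whenever $\sigma$ and $\tau$ are, that sum-indecomposability of the patterns in $B$ makes $\Av^I(B)$ sum closed, and then applying supermultiplicativity, Fekete's Lemma, and the Marcus--Tardos Theorem just as in Arratia's proof. No gaps.
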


For $|\beta|=4$ there are three possible growth rates of principal classes of the form $\Av(\beta)$. Regev~\cite{regev:asymptotic-valu:} showed that $\gr(\Av(1234))=9$, while B\'ona's work~\cite{bona:exact-enumerati:} shows that $\gr(\Av(1342))=8$. The final value, that of $\gr(\Av(1324))$, is currently unknown, although we have bounds in both directions. For the upper-bound, B\'ona~\cite{bona:a-new-record-fo:,bona:a-new-upper-bou:} has extended an argument of Claesson, Jel{\'{\i}}nek, and Steingr{\'{\i}}msson~\cite{claesson:upper-bounds-fo:} to show that $\gr(\Av(1324))\le (2+\sqrt{3})^2\approx 13.74$. The best current lower bound on $\gr(\Av(1324))$ is $9.81$, due to Bevan~\cite{bevan:a-large-set-of-:}, while Conway and Guttman~\cite{conway:on-the-growth-r:} have estimated that $\gr(\Av(1324))\approx 11.60$.

Next we provide a relation between $\beta$-avoiding permutations and $\beta$-avoiding involutions in the case where $\beta$ is a skew indecomposable involution (such as $\beta=1324$, which is the case we want it for).

\begin{proposition}
\label{prop-skew-indecomp-lower-bound}
For every skew indecomposable involution $\beta$, we have
$$
\ugr(\Av^I(\beta))\ge\sqrt{\gr(\Av(\beta))}.
$$
\end{proposition}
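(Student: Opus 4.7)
The plan is to construct, for each $\beta$-avoiding permutation $\pi$ of length $n$, a $\beta$-avoiding involution of length $2n$, thereby showing $|\Av^I_{2n}(\beta)| \ge |\Av_n(\beta)|$, and then take $2n$-th roots and pass to a $\limsup$.

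The natural candidate is the map $\pi \mapsto \pi \ominus \pi^{-1}$. First I would verify that $\tau = \pi \ominus \pi^{-1}$ is indeed an involution: on positions $1,\ldots,n$ we have $\tau(i) = \pi(i) + n$, while on positions $n+1,\ldots,2n$ we have $\tau(i) = \pi^{-1}(i-n)$. A direct check shows $\tau(\tau(i)) = i$ in both ranges, using $\pi^{-1}\pi = \pi\pi^{-1} = \mathrm{id}$. Injectivity of $\pi \mapsto \pi \ominus \pi^{-1}$ is clear, since $\pi$ is recovered from the top-left block.

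The next step, which is the only substantive one, is to show that $\pi \ominus \pi^{-1}$ avoids $\beta$ whenever $\pi$ does. Any occurrence of a pattern in a skew sum $\sigma \ominus \tau$ splits into its ``top'' portion (lying in positions $1,\ldots,n$, taking values in $n+1,\ldots,2n$) and its ``bottom'' portion (lying in positions $n+1,\ldots,2n$, taking values in $1,\ldots,n$). Since every value in the top block exceeds every value in the bottom block, the realized pattern is itself the skew sum of these two portions. Because $\beta$ is skew indecomposable, one of the two portions must be empty, so any occurrence of $\beta$ in $\pi \ominus \pi^{-1}$ is confined either to the copy of $\pi$ or to the copy of $\pi^{-1}$. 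Neither can happen: by hypothesis $\pi$ avoids $\beta$, and since $\beta = \beta^{-1}$ (as $\beta$ is an involution) the permutation $\pi^{-1}$ also avoids $\beta$. I expect this to be the main obstacle in the write-up, but it is really just the combination of the two hypotheses on $\beta$.

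Finally, the inequality $|\Av^I_{2n}(\beta)| \ge |\Av_n(\beta)|$ gives
\[
\sqrt[2n]{|\Av^I_{2n}(\beta)|} \;\ge\; \sqrt[2n]{|\Av_n(\beta)|} \;=\; \sqrt{\sqrt[n]{|\Av_n(\beta)|}}.
\]
Since $\gr(\Av(\beta))$ exists (by Arratia's theorem, as $\Av(\beta)$ is principal) the right-hand side converges to $\sqrt{\gr(\Av(\beta))}$. Taking $\limsup_{n\to\infty}$ on the left yields a subsequence limsup that is bounded above by $\ugr(\Av^I(\beta)) = \limsup_{m\to\infty}\sqrt[m]{|\Av^I_m(\beta)|}$, which completes the proof.
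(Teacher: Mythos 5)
Your proposal is correct and follows essentially the same route as the paper: the injection $\pi\mapsto\pi\ominus\pi^{-1}$ from $\Av_n(\beta)$ into $\Av^I_{2n}(\beta)$, justified by $\beta$ being an involution (so $\pi^{-1}$ avoids $\beta$) and skew indecomposable (so the skew sum avoids $\beta$), followed by taking $2n$-th roots and a $\limsup$. The extra details you supply --- the explicit check that $\pi\ominus\pi^{-1}$ is an involution and the block analysis of a pattern occurrence in a skew sum --- are exactly the facts the paper takes as known, so nothing is missing.
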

\begin{proof}
Suppose that $\beta$ is a skew indecomposable involution and take a permutation $\pi\in\Av_n(\beta)$. Because $\beta$ is an involution, $\pi^{-1}$ must also avoid $\beta$. Moreover, because $\beta$ is skew indecomposable, $\pi\ominus\pi^{-1}$ will avoid $\beta$. Note that $\pi\ominus\pi^{-1}$ is an involution for every permutation $\pi$, so under our hypotheses the mapping $\pi\mapsto\pi\ominus\pi^{-1}$ defines an injection from $\Av_n(\beta)$ to $\Av_{2n}^I(\beta)$. It follows that
$$
\ugr(\Av^I(\beta))
\ge
\limsup_{n\rightarrow\infty} \sqrt[2n]{|\Av_{2n}^I(\beta)|}
\ge
\limsup_{n\rightarrow\infty} \sqrt[2n]{|\Av_n(\beta)|}
=
\sqrt{\gr(\Av(\beta))},
$$
as desired.
\end{proof}

\begin{table}
\begin{footnotesize}
$$
\begin{array}{ccccccccc}%{|c|c|c|c|c|c|c|c|c|}
\hline
&&&&&&&&\\[-8pt]
&\bm{2431}&\bm{2341}&\bm{1342}&\bm{1234}&\bm{1324}&\bm{3421}&\bm{4231}&\bm{2413}\\[1pt]\hline
&&&&&&&&\\[-9pt]
|\Av^I_{12}(\beta)|&
16238&18477&18322&15511&15272&22878&16716&27246
\\[1pt]\hline
&&&&&&&&\\[-9pt]
|\Av^I_{13}(\beta)|&
40914&46825&47560&41835&40758&60794&46246&77132
\\[1pt]\hline
&&&&&&&&\\[-9pt]
|\Av^I_{14}(\beta)|&
103954&118917&124358&113634&112280&161668&128414&221336
\\[1pt]\hline
&&&&&&&&\\[-9pt]
|\Av^I_{15}(\beta)|&
262298&301734&323708&310572&304471&429752&361493&635078
\\[1pt]\hline
&&&&&&&&\\[-9pt]
|\Av^I_{16}(\beta)|&
665478&766525&846766&853467&852164&1142758&1020506&1839000
\\[1pt]\hline
&&&&&&&&\\[-9pt]
|\Av^I_{17}(\beta)|&
1680726&1946293&2208032&2356779&2341980&3038173&2913060&5331274
\\[1pt]\hline
&&&&&&&&\\[-9pt]
|\Av^I_{18}(\beta)|&
4260262&4944614&5777330&6536382&6640755&8078606&8335405&15555586
\\[1pt]\hline
&&&&&&&&\\[-9pt]
|\Av^I_{19}(\beta)|&
10766470&12557685&15082372&18199284&18460066&21479469&24067930&45465412
\\[1pt]\hline
&&&&&&&&\\[-9pt]
|\Av^I_{20}(\beta)|&
27274444&31900554&39469786&50852019&52915999&57113888&69646035&133517130
\\[1pt]\hline
&&&&&&&&\\[-9pt]
\text{growth}&
\multirow{2}{*}{?}&\approx 2.54&\approx 2.62&3&> 3.13,\;< 4.84&\multirow{2}{*}{?}&\multirow{2}{*}{?}&\approx 3.15
\\[1pt]
&&&&&&&&\\[-9pt]
\text{rate}&
&\text{Section~\ref{sec-2341}}&\text{Section~\ref{sec-1342}}&\text{Regev~\cite{regev:asymptotic-valu:}}&\text{Sections~\ref{section:grs} \&~\ref{section:1324-revisited}}&&&\text{BHV~\cite{brignall:simple-permutat:}}
\\[1pt]\hline
&&&&&&&&\\[-9pt]
\text{OEIS}&\OEISlink{A230551}&\OEISlink{A230552}&\OEISlink{A230553}&\OEISlink{A001006}&\OEISlink{A230554}&\OEISlink{A230555}&\OEISlink{A230556}&\OEISlink{A121704}\\[1pt]\hline
\end{array}
$$
\end{footnotesize}
\caption{The enumerations of involutions avoiding a pattern of length $4$ for $n=12$, $\dots$, $20$, with columns sorted according to the number of involutions of length $20$ avoiding the given pattern. Note that the bounds for $1324$ are actually bounds of the upper growth rate.}
\label{table-enum-2}
\end{table}

Bevan's bound on $\gr(\Av(1324))$ and Proposition~\ref{prop-skew-indecomp-lower-bound} therefore imply that
$$
\ugr(\Av^I(1324))>3.13,
$$
and thus the number of $1324$-avoiding involutions must overtake the number of $1234$-avoiding involutions at some point (these have the growth rate $3$ by Regev~\cite{regev:asymptotic-valu:}). Moreover, the number of $1324$-avoiding involutions should overtake the number of $2413$-avoiding involutions (which have a growth rate of approximately $3.15$), unless $\gr(\Av(1324))<9.9$, which seems incredibly unlikely.

We conclude this section by updating Jaggard's table to include data up to $n=20$ in Table~\ref{table-enum-2}, which was computed using Albert's PermLab package~\cite{PermLab1.0}. This data shows that the number of $1324$-avoiding involutions first overtakes the number of $1234$-avoiding involutions at $n=18$, and does not overtake the number of $2413$-avoiding involutions for $n\le 20$. Thus by our remarks above, \emph{the ordering of the columns in Table~\ref{table-enum-2} is likely still incorrect}.

\section{Simple Permutations and Separable Involutions}
\label{sec-simples}

Our principal tool in what follows is the \emph{substitution decomposition} of permutations into intervals.  An \emph{interval} in the permutation $\pi$ is a set of contiguous indices $I=[a,b]=\{a,a+1,\dots,b\}$ such that the set of values $\pi(I)=\{\pi(i) : i\in I\}$ is also contiguous.  Given a permutation $\sigma$ of length $m$ and nonempty permutations $\alpha_1,\dots,\alpha_m$, the \emph{inflation} of $\sigma$ by $\alpha_1,\dots,\alpha_m$,  denoted $\sigma[\alpha_1,\dots,\alpha_m]$, is the permutation of length $|\alpha_1|+\cdots+|\alpha_m|$ obtained by replacing each entry $\sigma(i)$ by an interval that is order isomorphic to $\alpha_i$ in such a way that the permutation of the intervals is order isomorphic to $\sigma$.  For example,
\[
2413[1,132,321,12]=4\ 798\ 321\ 56. 
\]
We have already introduced two special inflations: $\alpha_1\oplus\alpha_2$ is the same as $12[\alpha_1,\alpha_2]$, and $\alpha_1\ominus\alpha_2$ is the same as $21[\alpha_1,\alpha_2]$.

Every permutation of length $n\ge 1$ has \emph{trivial} intervals of lengths $0$, $1$, and $n$; all other intervals are termed \emph{proper}. A permutation of length at least $2$ is called \emph{simple} if it has no proper intervals.  The shortest simple permutations are thus $12$ and $21$, there are no simple permutations of length three, and the simple permutations of length four are $2413$ and $3142$.

Simple permutations and inflations are linked by the following result.

\begin{proposition}[Albert and Atkinson~\cite{albert:simple-permutat:}]
\label{simple-decomp-unique}
Every permutation $\pi$ except $1$ is the inflation of a unique simple permutation $\sigma$.  Moreover, if $\pi=\sigma[\alpha_1,\dots,\alpha_m]$ for a simple permutation $\sigma$ of length $m\ge 4$, then each interval $\alpha_i$ is unique.  If $\pi$ is an inflation of $12$ (i.e., is sum decomposable), then there is a unique sum indecomposable $\alpha_1$  such that $\pi=\alpha_1\oplus\alpha_2$.  The same holds, mutatis mutandis, with $12$ replaced by $21$ and sum replaced by skew.
\end{proposition}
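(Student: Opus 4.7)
The plan is to organize the proof around the interval structure of $\pi$. The key technical tool is an overlap lemma: if $I$ and $J$ are intervals of $\pi$ with $I\cap J$, $I\setminus J$, and $J\setminus I$ all nonempty, then $I\cup J$, $I\cap J$, $I\setminus J$, and $J\setminus I$ are intervals as well. This follows directly from the definition, since in that case the value sets $\pi(I)$ and $\pi(J)$ are contiguous integer intervals that overlap without containment, so their union, intersection, and relative complements are again contiguous.

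For existence, I would split into cases. If $\pi$ is sum decomposable, write it in its maximal form $\pi=\gamma_1\oplus\cdots\oplus\gamma_k$ with each $\gamma_i$ sum indecomposable, and set $\sigma=12$, $\alpha_1=\gamma_1$, and $\alpha_2=\gamma_2\oplus\cdots\oplus\gamma_k$; the skew decomposable case is symmetric. Otherwise, define an equivalence relation on positions by declaring $i\sim j$ when some proper interval of $\pi$ contains both. The overlap lemma furnishes transitivity: if $i,j\in I$ and $j,k\in J$ then $I\cup J$ is an interval, and a short computation shows that $I\cup J=[1,|\pi|]$ would force $\pi$ to be sum or skew decomposable (two overlapping contiguous value sets whose union is $\{1,\dots,|\pi|\}$ must be bottom- and top-aligned). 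The resulting equivalence classes partition $[1,|\pi|]$ into intervals $\alpha_1,\dots,\alpha_m$, and the induced quotient $\sigma$ must be simple, since a proper interval of $\sigma$ would lift to a proper interval of $\pi$ fusing two distinct classes. Finally, $|\sigma|\ge 4$ because $|\sigma|=2$ would revive sum or skew decomposability and there are no simple permutations of length $3$.

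For uniqueness of $\sigma$ when $|\sigma|\ge 4$, I would argue that the blocks $\alpha_i$ are intrinsic to $\pi$: each $\alpha_i$ is a proper interval, and any proper interval of $\pi$ properly containing some $\alpha_i$ would correspond to a proper interval of positions in $\sigma$, contradicting simplicity of $\sigma$. Hence the $\alpha_i$ are exactly the equivalence classes above, and $\sigma$ is determined by their relative order. To rule out an alternative decomposition $\pi=\tau[\beta_1,\beta_2]$ with $\tau\in\{12,21\}$, I would use that any simple permutation of length at least $4$ is both sum and skew indecomposable, together with the fact that inflating a sum (resp.\ skew) indecomposable skeleton yields a sum (resp.\ skew) indecomposable permutation; this forces $\pi$ to be neither sum nor skew decomposable, ruling out $\tau\in\{12,21\}$.

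In the remaining case $\sigma=12$, uniqueness of the sum indecomposable first block is straightforward: if $\pi=\alpha_1\oplus\alpha_2=\beta_1\oplus\beta_2$ with $\alpha_1,\beta_1$ sum indecomposable and $|\alpha_1|\le|\beta_1|$, then the first $|\alpha_1|$ entries of $\pi$ form an initial segment of values of $\beta_1$, yielding $\beta_1=\alpha_1\oplus\gamma$; by sum indecomposability of $\beta_1$ we get $\gamma$ empty and $|\alpha_1|=|\beta_1|$, and the skew case is analogous. The main obstacle I anticipate is verifying the claim that inflating a sum indecomposable skeleton preserves sum indecomposability, which is needed to rule out the mixed scenario in paragraph three. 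This reduces to a case analysis of whether a hypothetical sum split of $\pi$ falls between two blocks (forcing a sum split of $\sigma$) or inside a block (forcing a sum split of both $\sigma$ and the relevant $\alpha_i$), and some careful bookkeeping with values and ranks is required.
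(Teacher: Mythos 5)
The paper offers no proof of this proposition to compare against --- it is imported verbatim from Albert and Atkinson --- so your proposal has to stand on its own, and its outline is indeed the standard argument from that source: the overlap lemma for intervals, the partition of positions into maximal proper intervals when $\pi$ is neither sum nor skew decomposable, and the separate treatment of the $12$/$21$ skeletons (your handling of the sum-decomposable uniqueness and of the fact that no permutation is both sum and skew decomposable is fine, and your plan for ``inflating an indecomposable skeleton preserves indecomposability'' is the right case analysis).

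The genuine gap is the pivotal assertion in your uniqueness paragraph that a proper interval of $\pi$ properly containing some $\alpha_i$ ``would correspond to a proper interval of positions in $\sigma$.'' An arbitrary proper interval $I$ of $\pi$ need not be a union of blocks: it can contain $\alpha_i$ while chopping the neighbouring blocks at its ends, or straddle two blocks containing neither (your overlap lemma reduces the latter to the former, but the former is exactly what is unproved), so $I$ does not directly determine a set of positions of $\sigma$ at all. What you need, and what your sketch skips, is the trace argument: let $T$ be the set of indices $j$ with $A_j\cap I\neq\emptyset$, where $A_j$ is the index set of $\alpha_j$. Then $T$ is positionally contiguous, and by contiguity of the value set $\pi(I)$, whenever $I$ meets blocks $k$ and $l$ it contains outright every block whose $\sigma$-value lies strictly between $\sigma(k)$ and $\sigma(l)$; hence $\sigma(T)$ is contiguous, $T$ is an interval of $\sigma$ of size at least $2$, and simplicity forces $T=[m]$. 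A block that $I$ meets but does not contain must then be extreme both in position ($j\in\{1,m\}$) and in value ($\sigma(j)\in\{1,m\}$), which is impossible for a simple $\sigma$ of length $m\geq 4$ since $\sigma(1),\sigma(m)\notin\{1,m\}$; so $I$ contains every block and $I=[1,n]$, contradicting properness. This is the heart of the whole uniqueness claim (it is what shows every proper interval of $\pi$ lives inside a single block, so the blocks are exactly your equivalence classes), and it is substantially more delicate than the step you flag as the anticipated obstacle. Two small additional patches: your relation ``$i\sim j$ iff some proper interval contains both'' is not reflexive at positions lying in no proper interval, so take its reflexive closure (singleton classes) before speaking of a partition; and you should note explicitly that a class, being a union of pairwise-overlapping proper intervals through a common point, is itself a proper interval by finitely many applications of the overlap lemma together with your non-decomposability observation.
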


To give an easy example of using the substitution decomposition to count a permutation class, we apply it to the class $\Av(2413,3142)$, known also as the \emph{separable permutations} (this enumeration was first performed by Shapiro and Stephens~\cite{shapiro:bootstrap-perco:}). It is well-known that every simple permutation of length at least four contains either $2413$ or $3142$, so the only simple permutations in this class are $12$ and $21$, i.e., every nontrivial separable permutation is either a sum or a skew sum. Let us denote by $f$ the generating function for the separable permutations, $f_\oplus$ the generating function for sum decomposable separable permutations, and $f_\ominus$ the generating function for skew decomposable separable permutations. Quite trivially, we see that
$$
f=x+f_\oplus+f_\ominus.
$$
(If this class contained more simple permutations, the equation above would also include terms counting their inflations.) By Proposition~\ref{simple-decomp-unique}, we can write every sum decomposable permutation uniquely in the form $\alpha_1\oplus\alpha_2$ where $\alpha_1$ is sum indecomposable and $\alpha_2$ is arbitrary. Since the generating function for the sum indecomposable separable permutations is $f-f_\oplus$ and the class of separable permutations is closed under sums, we have $f_\oplus=(f-f_\oplus)f$, and thus it follows that $f_\oplus={f^2}/(1+f)$. By symmetry, $f_\ominus=f_\oplus$, and thus $f=x+2f^2/(1+f)$. Solving this equation shows that the separable permutations are indeed counted by the (large) Schr\"oder numbers.

The substitution decomposition has proved to be a powerful tool for describing the structure of permutation classes. However, it seems to have been used to count pattern-avoiding involutions only once, when Brignall, Huczynska, and Vatter~\cite{brignall:simple-permutat:} enumerated the separable involutions. We first review the general principles and then (re)apply them to this case.

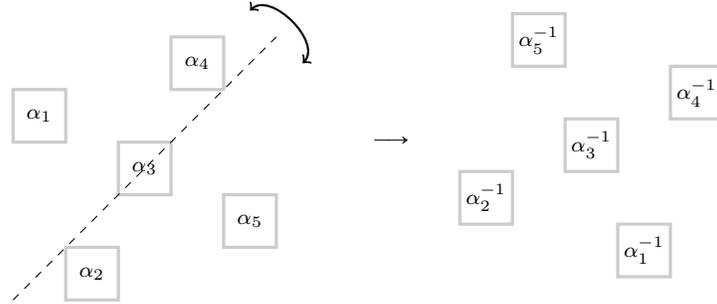
\begin{figure}
	\begin{footnotesize}
	$$
		\begin{array}{ccc}

		\begin{tikzpicture}[scale=.7,baseline=(current bounding box.center)]
		\foreach \x/\y in {1/4, 2/1, 3/3, 4/5, 5/2}{
			\draw[very thick, color=lightgray] (\x,\y) rectangle (\x+1,\y+1);
			\node at (\x+.5,\y+.5) {$\alpha_{\x}$};
		}
		\draw[dashed] (1,1)--(6,6);
		\draw[thick, <->] (5.5,6.5) to[out=45, in=45] (6.5,5.5);
		\end{tikzpicture}

		&
		\quad\longrightarrow\quad
		&
		
		\begin{tikzpicture}[scale=.7,baseline=(current bounding box.center)]
		\foreach \x/\y in {1/4, 2/1, 3/3, 4/5, 5/2}{
			\draw[very thick, color=lightgray] (\y,\x) rectangle (\y+1,\x+1);
			\node at (\y+.5,\x+.5) {$\alpha_{\x}^{-1}$};
		}
		\end{tikzpicture}
	
		\end{array}
	$$
	\end{footnotesize}
	\caption{The inverse of $41352[\alpha_1,\alpha_2,\alpha_3,\alpha_4,\alpha_5]$ is $25314[\alpha_2^{-1},\alpha_5^{-1},\alpha_3^{-1},\alpha_1^{-1},\alpha_4^{-1}]$.}
	\label{figure:inverse-inflation}
\end{figure}

We begin by considering the effect of inversion on the substitution decomposition. As illustrated in Figure~\ref{figure:inverse-inflation}, we have
	\[
	\left(\sigma[\alpha_1,\dots,\alpha_m]\right)^{-1}
	=
	\sigma^{-1}[\alpha_{\sigma^{-1}(1)}^{-1},\dots,\alpha_{\sigma^{-1}(m)}^{-1}].
	\]
If $|\sigma|\ge 4$, then the uniqueness conditions in Proposition~\ref{simple-decomp-unique} show that $\sigma[\alpha_1,\dots,\alpha_m]$ is an involution if and only if $\sigma$ is an involution and $\alpha_i=\alpha_{\sigma^{-1}(i)}^{-1}=\alpha_{\sigma(i)}^{-1}$ for all $1 \leq i \leq m$. This rule also applies to sum decomposable permutations; $\alpha_1\oplus\alpha_2$ is an involution if and only if both $\alpha_1$ and $\alpha_2$ are. We collect these observations below.

\begin{proposition}[Brignall, Huczynska, and Vatter~\cite{brignall:simple-permutat:}]
\label{involution-decomp-1}
Let $\sigma\neq 21$ be a simple permutation. Then $\pi=\sigma[\alpha_1,\dots,\alpha_m]$ is an involution if and only if $\sigma$ is an involution and $\alpha_i=\alpha_{\sigma^{-1}(i)}^{-1}=\alpha_{\sigma(i)}^{-1}$ for all $1 \leq i \leq m$.
\end{proposition}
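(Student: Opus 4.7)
The plan is to reduce everything to the inversion formula for inflations stated just above the proposition, namely
\[
\bigl(\sigma[\alpha_1,\dots,\alpha_m]\bigr)^{-1}
= \sigma^{-1}\!\left[\alpha_{\sigma^{-1}(1)}^{-1},\dots,\alpha_{\sigma^{-1}(m)}^{-1}\right],
\]
and then invoke the uniqueness clauses of Proposition~\ref{simple-decomp-unique}. Note that the hypothesis $\sigma\neq 21$ allows two sub-cases: either $|\sigma|\geq 4$, in which case the full strength of Albert--Atkinson uniqueness is available, or $\sigma=12$, which must be handled slightly by hand since Proposition~\ref{simple-decomp-unique} only guarantees uniqueness of the $\oplus$-decomposition when the first summand is sum-indecomposable.

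For the ``if'' direction, I would assume $\sigma$ is an involution and $\alpha_i=\alpha_{\sigma(i)}^{-1}$ for all $i$, and simply plug into the inversion formula: since $\sigma^{-1}=\sigma$, we get
\[
\pi^{-1} = \sigma\!\left[\alpha_{\sigma(1)}^{-1},\dots,\alpha_{\sigma(m)}^{-1}\right]
= \sigma[\alpha_1,\dots,\alpha_m]=\pi,
\]
where the middle equality uses the hypothesis $\alpha_{\sigma(i)}^{-1}=\alpha_i$ entry-wise. This direction is routine.

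For the ``only if'' direction, suppose $\pi=\pi^{-1}$, so
\[
\sigma[\alpha_1,\dots,\alpha_m]
= \sigma^{-1}\!\left[\alpha_{\sigma^{-1}(1)}^{-1},\dots,\alpha_{\sigma^{-1}(m)}^{-1}\right].
\]
If $|\sigma|\ge 4$, then both sides exhibit $\pi$ as an inflation of a simple permutation, so Proposition~\ref{simple-decomp-unique} forces $\sigma=\sigma^{-1}$ (whence $\sigma$ is an involution) and $\alpha_i=\alpha_{\sigma^{-1}(i)}^{-1}$ for each $i$; since $\sigma=\sigma^{-1}$, this is equivalent to $\alpha_i=\alpha_{\sigma(i)}^{-1}$. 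If instead $\sigma=12$, then $\pi=\alpha_1\oplus\alpha_2$ and the inversion formula collapses to $\pi^{-1}=\alpha_1^{-1}\oplus\alpha_2^{-1}$. Here I cannot appeal directly to uniqueness, but the equality $\alpha_1\oplus\alpha_2=\alpha_1^{-1}\oplus\alpha_2^{-1}$ together with the obvious length identities $|\alpha_i|=|\alpha_i^{-1}|$ forces agreement block-by-block: the first $|\alpha_1|$ entries of the two sides must coincide, yielding $\alpha_1=\alpha_1^{-1}$, and then $\alpha_2=\alpha_2^{-1}$ follows. Since $\sigma=12$ is an involution with $\sigma(i)=i$, this is exactly the claimed condition.

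The main (minor) obstacle is precisely the $\sigma=12$ case, where one must observe that although $\oplus$-decompositions are not unique in general, \emph{any} such identity between two decompositions with the same block-length vector is coordinate-wise. Everything else follows mechanically from the inversion formula for inflations and the uniqueness statement of Albert--Atkinson.
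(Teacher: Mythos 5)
Your proposal is correct and follows essentially the same route as the paper, which establishes this proposition in the paragraph preceding it by combining the inversion formula for inflations with the uniqueness clauses of Proposition~\ref{simple-decomp-unique} for $|\sigma|\ge 4$, and noting separately that $\alpha_1\oplus\alpha_2$ is an involution exactly when both summands are. Your explicit block-by-block comparison for the $\sigma=12$ case just spells out the observation the paper states without detail.
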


The skew decomposable involutions require a bit more care. If $\alpha_1$ and $\alpha_2$ are both skew indecomposable, then $\alpha_1\ominus\alpha_2$ is an involution if and only if $\alpha_1=\alpha_2^{-1}$. Otherwise (in the case where we have more than two skew components) we decompose these permutations as $\alpha_1\ominus\alpha_2\ominus\alpha_3$. The characterization is below.

\begin{proposition}[Brignall, Huczynska, and Vatter~\cite{brignall:simple-permutat:}]
\label{involution-decomp-2}
The skew decomposable involutions are precisely those of the form
\begin{itemize}
\item $21[\alpha_1,\alpha_2]$ for skew indecomposable $\alpha_1$ and $\alpha_2$ with $\alpha_1=\alpha_2^{-1}$ and
\item $321[\alpha_1,\alpha_2,\alpha_3]$, where $\alpha_1$ and $\alpha_3$ are skew indecomposable, $\alpha_1=\alpha_3^{-1}$, and $\alpha_2$ is an involution.
\end{itemize}
\end{proposition}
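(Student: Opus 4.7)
The plan is to follow the standard substitution-decomposition argument: decompose a skew decomposable involution $\pi$ into its maximal skew components, use the inverse formula from Figure~\ref{figure:inverse-inflation} applied to $\sigma=k(k-1)\cdots 1$, and then regroup these components so that $\pi$ fits one of the two stated forms.

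First I would write $\pi=\gamma_1\ominus\gamma_2\ominus\cdots\ominus\gamma_k$ with each $\gamma_i$ skew indecomposable. By Proposition~\ref{simple-decomp-unique} (mutatis mutandis for skew sums, applied iteratively), this decomposition is unique. From Figure~\ref{figure:inverse-inflation} applied to the reverse permutation $k(k-1)\cdots 1$, which is its own inverse, we get
\[
\pi^{-1}=\gamma_k^{-1}\ominus\gamma_{k-1}^{-1}\ominus\cdots\ominus\gamma_1^{-1}.
\]
Because each $\gamma_j^{-1}$ is also skew indecomposable, the uniqueness of the skew indecomposable decomposition applied to the equation $\pi=\pi^{-1}$ forces
\[
\gamma_i=\gamma_{k+1-i}^{-1} \quad\text{for all } 1\le i\le k.
\]

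Now I split on $k$. When $k=2$, this reads $\gamma_1=\gamma_2^{-1}$ with $\gamma_1,\gamma_2$ skew indecomposable, which is exactly the first listed form. When $k\ge 3$, I set $\alpha_1=\gamma_1$, $\alpha_3=\gamma_k$, and $\alpha_2=\gamma_2\ominus\cdots\ominus\gamma_{k-1}$, so that $\pi=321[\alpha_1,\alpha_2,\alpha_3]$. The outer pieces $\alpha_1$ and $\alpha_3$ are skew indecomposable by construction and satisfy $\alpha_1=\alpha_3^{-1}$. A short computation using $(\gamma_2\ominus\cdots\ominus\gamma_{k-1})^{-1}=\gamma_{k-1}^{-1}\ominus\cdots\ominus\gamma_2^{-1}$ together with $\gamma_j^{-1}=\gamma_{k+1-j}$ shows that $\alpha_2^{-1}=\alpha_2$, so $\alpha_2$ is an involution (possibly empty only if $k=2$, which is handled separately). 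This gives the second form.

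For the converse, I would verify directly that both constructions produce involutions: for $21[\alpha_1,\alpha_2]$ with $\alpha_1=\alpha_2^{-1}$ the inversion formula gives $(\alpha_1\ominus\alpha_2)^{-1}=\alpha_2^{-1}\ominus\alpha_1^{-1}=\alpha_1\ominus\alpha_2$, and for $321[\alpha_1,\alpha_2,\alpha_3]$ with $\alpha_1=\alpha_3^{-1}$ and $\alpha_2$ an involution, the inversion formula gives $(\alpha_1\ominus\alpha_2\ominus\alpha_3)^{-1}=\alpha_3^{-1}\ominus\alpha_2^{-1}\ominus\alpha_1^{-1}=\alpha_1\ominus\alpha_2\ominus\alpha_3$. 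I do not expect any real obstacle here; the only subtle point is to invoke the uniqueness of the full skew decomposition into skew indecomposables (a straightforward iteration of the skew analogue of Proposition~\ref{simple-decomp-unique}) so that matching $\pi=\pi^{-1}$ component-by-component is legitimate.
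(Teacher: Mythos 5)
Your argument is correct and follows essentially the same route the paper (and the cited Brignall--Huczynska--Vatter source) takes: decompose the involution into its unique sequence of skew indecomposable components, apply the inversion formula $(\gamma_1\ominus\cdots\ominus\gamma_k)^{-1}=\gamma_k^{-1}\ominus\cdots\ominus\gamma_1^{-1}$ with uniqueness to get $\gamma_i=\gamma_{k+1-i}^{-1}$, and split into the cases of two versus more than two components, exactly as the paper's discussion preceding the proposition indicates. Your regrouping of the middle components into a single involution $\alpha_2$ and the direct verification of the converse are precisely the details the paper leaves implicit, so there is nothing to add.
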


To provide a gentle introduction to the techniques used in this paper, we now rederive the enumeration of the separable involutions from \cite{brignall:simple-permutat:}. Note that in Tables~\ref{table-enum-1} and \ref{table-enum-2}, these are listed as the $2413$-avoiding involutions, because if an involution avoids $2413$ then it must also avoid $2413^{-1}=3142$. We retain the definitions of $f$, $f_\oplus$, and $f_\ominus$ from above, and additionally let $g$ denote the generating function for $\Av^I(2413)$, $g_\oplus$ the generating function for the sum decomposable permutations in $\Av^I(2413)$, and $g_\ominus$ the generating function for the skew decomposable permutations in $\Av^I(2413)$. From Propositions~\ref{simple-decomp-unique} and \ref{involution-decomp-1} we see that as in the non-involution case, $g_\oplus=(g-g_\oplus)g$, so $g_\oplus=g^2/(1+g)$.

Next we count skew decomposable permutations. By Proposition~\ref{involution-decomp-2}, the involutions in $\Av^I(2413)$ of the form $\alpha\ominus\alpha^{-1}$ for $\alpha$ skew indecomposable are counted by
	\[
	f(x^2)-f_\ominus(x^2).
	\]
The skew decomposable inflations of $321$ in $\Av^I(2413)$ are counted by
	\[
	\left(f(x^2)-f_\ominus(x^2)\right)\cdot g.
	\]
Accounting for the trivial permutation $1$, we have the equation
	\[g = x + \frac{g^2}{1+g} + \left(f(x^2)-f_\ominus(x^2)\right)\left(1+g\right).\]
Rearranging terms gives
	\[g^2\left(f(x^2)-f_\ominus(x^2)\right) + g\left(x - 1 + 2\left(f(x^2)-f_\ominus(x^2)\right)\right) + \left(x + f(x^2)-f_\ominus(x^2)\right) = 0.\]
Finally, solving for $g$ yields the desired generating function,
	\[g = \frac{1 - 3x + x^2 + x^3 + r(1+x) - \sqrt{q}}{2(1 - r - x^2)}\]
where
	\[r = \sqrt{1-6x^2+x^4}\]
and
	\[q = -6 - 20x + 38x^2 + 24x^3 - 18x^4 - 4x^5 + 2x^6 + r(10 + 12x - 12x^2 - 4x^3 + 2x^4).\]
The growth rate of this generating function is the reciprocal of the singularity closest to the origin, and is therefore
	\[\f{1}{\sqrt{3}-\sqrt{2}} = \sqrt{2} + \sqrt{3} \approx 3.15.\]

%
%which, along with our previous computations, allows us to compute that the minimal polynomial of $g$ is
%	\[
%	x^2g^4 + (x^3+3x^2+x-1)g^3 + (3x^3+6x^2-x)g^2  + (3x^3+7x^2-x-1)g +x^3+3x^2+x=0.
%	%x+3*x^2+x^3 + (-1-x+7*x^2+3*x^3)*g + (-x+6*x^2+3*x^3)*g^2 + (-1+x+3*x^2+x^3)*g^3 + x^2*g^4=0.
%	\]
%%Setting
%%	\begin{align*}
%%		a_4 &= x^2,\\
%%		a_3 &= x^3+3x^2+x-1,\\
%%		a_2 &= 3x^3 + 6x^2 - x,\\
%%		a_1 &= 3x^3 + 7x^2 - x- 1,\\
%%		a_0 &= x^3 + 3x^2 + x,
%%	\end{align*}
%The above minimal polynomial has the form
%	\[a_4g^4 + a_3g^3 + a_2g^2 + a_1g + a_0 = 0.\]
%Using the techniques in Flajolet and Sedgewick~\cite[Section VII.7 and Appendix B.1]{flajolet:analytic-combin:}, we find that the discriminant of this minimal polynomial with main variable $g$ and parameter $x$ is
%	\begin{align*}
%		D_g(x)
%		&=
%		\operatorname{det}\br{\begin{array}{ccccccc}
%			a_4 & a_3 & a_2 & a_1 & a_0 & 0 & 0\\
%			0 & a_4 & a_3 & a_2 & a_1 & a_0 & 0\\
%			0 & 0 & a_4 & a_3 & a_2 & a_1 & a_0\\
%			4a_4 & 3a_3 & 2a_2 & a_1 & 0 & 0 & 0\\
%			0 & 4a_4 & 3a_3 & 2a_2 & a_1 & 0 & 0\\
%			0 & 0 & 4a_4 & 3a_3 & 2a_2 & a_1 & 0\\
%			0 & 0 & 0 & 4a_4 & 3a_3 & 2a_2 & a_1
%		\end{array}}\\[10pt]
%		&=
%		-4x^2(x^2-2x-1)^2(x^2+2x-1)^2(x^4-10x^2+1).
%	\end{align*}
%	
%	
%Since the smallest positive root of $D_g(x)$ is $\sqrt{3}-\sqrt{2}$, the growth rate of $g$ is
%	\[\f{1}{\sqrt{3}-\sqrt{2}} = \sqrt{2} + \sqrt{3} \approx 3.15.\]
	
%
%
%
%
%
%

\section{Simple Involutions Avoiding $123$}
\label{sec-123}

In both of the sets we enumerate, (nearly all of) the simple involutions avoid $123$, and in order to count the sets we are interested in, we need the enumeration of the simple $123$-avoiding involutions by their number of fixed points, $\fp(\sigma)$, number of left-to-right minima, $\lrmin(\sigma)$, and number of right-to-left maxima, $\rlmax(\sigma)$. Since every $123$-avoiding permutation can be expressed as the union of two decreasing sequences, every entry of such a permutation is either a left-to-right minimum, a right-to-left maximum, or both. Moreover, no entry of a \emph{simple} $123$-avoiding permutation of length at least four can be both a left-to-right minimum and a right-to-left maximum (because then the permutation would be skew decomposable). Thus the generating functions we are interested in are
	\[
	\widehat{s}^{(i)}(u,v)
	=
	\sum_{\mathclap{\substack{\textrm{simple $\sigma\in\Av^I(123)$}\\ \text{with $\fp(\sigma)=i$}}}} \; u^{\lrmin(\sigma)} v^{\rlmax(\sigma)}.
	\]
Note that $\widehat{s}^{(i)}=0$ for $i\ge 3$, because the fixed points of a permutation form an increasing subsequence.

The \emph{staircase decomposition} was introduced by Albert, Atkinson, Brignall, Ru\v{s}kuc, Smith, and West~\cite{albert:growth-rates-fo:} as part of the study of subclasses of $\Av(321)$. This decomposition was later used by Albert and Vatter~\cite{albert:generating-and-:} to explicitly enumerate the simple permutations of $\Av(321)$. As $\Av(123)$ is a symmetry of $\Av(321)$, we follow the same approach, using much of the same terminology. Before moving on to involutions, we first give a brief summary of the techniques used in~\cite{albert:generating-and-:} by mirroring their methods to enumerate the simple permutations in $\Av(123)$. 

Every simple permutation in the class $\Av(123)$ can be uniquely written as the union of two decreasing sequences. We can further partition the entries of such a permutation into the cells of a ``staircase decomposition'', whose precise definition we opt to omit in favor of an illustration, namely Figure~\ref{figure:staircase-1}.
We will however carefully define one particular type of staircase decomposition, which we call the \emph{greedy} gridding. The greedy gridding ensures that each permutation $\sigma \in \Av(123)$ can only be partitioned in exactly one way.
%For a given permutation $\sigma \in \Av(123)$, there may be several different ways of partitioning the entries into cells;
%to ensure uniqueness, we define a unique partition which we call the \emph{greedy} gridding.
To find the greedy gridding of $\sigma \in \Av(123)$, take the first cell to consist of the longest decreasing prefix of $\sigma$. Then, each new eastward cell contains all entries whose value is greater than any previously included entry, and each new southward cell contains all entries whose index is less than any previously included entry.

\begin{figure}
	\begin{center}
		\begin{tikzpicture}[scale=.25, yscale=-1]
			% draw the outer boxes, using a loop
			\foreach \x/\y in {0/0, 5/0, 5/5, 10/5}{
				\draw[very thick, color=lightgray] (\x, \y) rectangle (\x + 5, \y + 5);
			}
			% closed dots
			\node[closed] at (2,2) {};
			\node[closed] at (4,4) {};
			\node[closed] at (5.6,.6) {};
			\node[closed] at (5.9,5.9) {};
			\node[closed] at (6.2,1.2) {};
			\node[closed] at (7.6,7.6) {};
			\node[closed] at (8.4,3.4) {};
			\node[closed] at (10.5,5.5) {};
			\node[closed] at (12,7) {};
		\end{tikzpicture}
	\end{center}	
	\caption{The staircase decomposition for the permutation $759381642$.}
	\label{figure:staircase-1}
\end{figure}
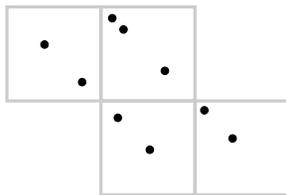

The iterative construction of a simple $123$-avoiding permutation can then be described using a sequence of hollow dots and filled dots. A hollow dot represents a space where we must add a nonempty decreasing sequence of entries in the next step, while the filled dots represent the entries themselves once they have been placed. Hollow dots can only exist in the outermost nonempty cell of each step in the recurrence. To preserve simplicity, whenever a hollow dot is filled by two or more entries in the next step of the recurrence, each neighboring pair of these entries must be split by a hollow dot. Figure~\ref{figure:staircase-2} demonstrates the steps of the recurrence which would build the permutation shown in Figure~\ref{figure:staircase-1}.

\begin{figure}
	\begin{center}
		\begin{tikzpicture}[scale=.2, yscale=-1]

			\foreach \x/\y in {0/0}{
				\draw[very thick, color=lightgray] (\x, \y) rectangle (\x + 5, \y + 5);
			}
			\node[open2] at (2.5,2.5) {};
			
		\node at (7.5,2.5) {$\longrightarrow$};

		\begin{scope}[shift={(10,0)}]
			\foreach \x/\y in {0/0, 5/0}{
				\draw[very thick, color=lightgray] (\x, \y) rectangle (\x + 5, \y + 5);
			}
			\draw (3,3)--(8,3);
			\draw (1,1)--(6,1);
			\node[closed] at (2,2) {};
			\node[closed] at (4,4) {};
			\node[open2] at (6,1) {};
			\node[open2] at (8,3) {};
		\end{scope}
		
		\node at (22.5,2.5) {$\longrightarrow$};
		
		\begin{scope}[shift={(25,0)}]
			\foreach \x/\y in {0/0, 5/0, 5/5}{
				\draw[very thick, color=lightgray] (\x, \y) rectangle (\x + 5, \y + 5);
			}
			\draw (5.9,1)--(5.9,5.9);
			\draw (7.6,2.6)--(7.6,7.6);
			\node[closed] at (2,2) {};
			\node[closed] at (4,4) {};
			\node[closed] at (5.6,.6) {};
			\node[open2] at (5.9,5.9) {};
			\node[closed] at (6.4,1.4) {};
			\node[open2] at (7.6,7.6) {};
			\node[closed] at (8.4,3.4) {};
		\end{scope}
		
		\node at (37.5,2.5) {$\longrightarrow$};

		\begin{scope}[shift={(40,0)}]
			\foreach \x/\y in {0/0, 5/0, 5/5, 10/5}{
				\draw[very thick, color=lightgray] (\x, \y) rectangle (\x + 5, \y + 5);
			}
			\draw (7,7)--(12,7);
			\draw (5.7,5.7)--(10.7,5.7);
			\node[closed] at (2,2) {};
			\node[closed] at (4,4) {};
			\node[closed] at (5.6,.6) {};
			\node[closed] at (6.1,6.1) {};
			\node[closed] at (6.4,1.4) {};
			\node[closed] at (7.6,7.6) {};
			\node[closed] at (8.4,3.4) {};
			\node[open2] at (10.7,5.7) {};
			\node[open2] at (12,7) {};
		\end{scope}
		
		\node at (57.5,2.5) {$\longrightarrow$};

		\begin{scope}[shift={(60,0)}]
			\foreach \x/\y in {0/0, 5/0, 5/5, 10/5}{
				\draw[very thick, color=lightgray] (\x, \y) rectangle (\x + 5, \y + 5);
			}
			\node[closed] at (2,2) {};
			\node[closed] at (4,4) {};
			\node[closed] at (5.6,.6) {};
			\node[closed] at (6,6) {};
			\node[closed] at (6.2,1.2) {};
			\node[closed] at (7.6,7.6) {};
			\node[closed] at (8.4,3.4) {};
			\node[closed] at (10.5,5.5) {};
			\node[closed] at (12,7) {};
		\end{scope}

		\end{tikzpicture}
	\end{center}	
	\caption{The evolution of the permutation $759381642$ by our recurrence.}
	\label{figure:staircase-2}
\end{figure}
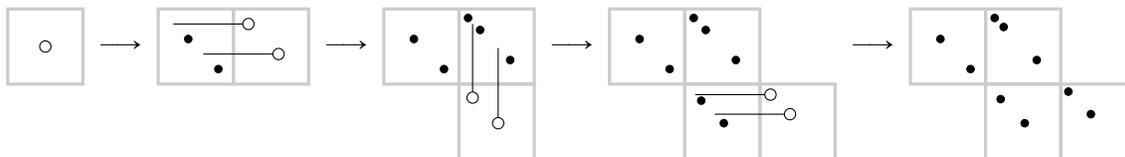

Formally, let $s_i(x,y)$ count the possible configurations in the $i$th stage of the recurrence, where hollow dots are counted by $y$ and filled dots are counted by $x$. We follow the exposition in~\cite{albert:generating-and-:} by first giving a ``mostly correct'' derivation, and then correcting two small errors to obtain the correct result.

The first stage of the recurrence is obviously counted by 
	\[s_1(x,y) = y.\]
In the second step, we may inflate this entry $y$ by some decreasing sequence of entries. Each pair of these entries must be separated by a hollow dot in the next cell, and we have the option of having a hollow dot above the first entry. Therefore, the second step can be counted by the generating function
	\[x(1+y) + x^2(y+y^2) + x^3(y^2 + y^3) + \cdots = \f{x(1+y)}{1-xy},\]
so that
	\[s_2(x,y) = s_1\pa{x, \f{x(1+y)}{1-xy}}.\]
Each subsequent step has the same recurrence, so that
	\[s_{n+1}(x,y) = s_n\pa{x, \f{x(1+y)}{1-xy}}.\]
The desired generating function $s(x)$ is found by taking the limit as $n \to \infty$ (a procedure which is explained in more detail by Albert and Vatter~\cite{albert:generating-and-:}).

We now correct the two aforementioned small errors in the above reasoning. In the second step, the optional hollow dot above the first entry is actually required; otherwise the permutation starts with its biggest element and is not simple. Furthermore, when this hollow dot is inflated by entries (which spawn more hollow dots in the third cell), it is forbidden to have the optional hollow dot to its left, as this violates the definition of greediness. In Figures~\ref{figure:bad-stuff-1}~and~\ref{figure:bad-stuff-2}, the required hollow dot is represented by a triangle, while the forbidden hollow dot is represented by a square. Thankfully, neither of these issues occur after the third cell.

\begin{figure}
		\hfill\hfill
		\minipage{0.5\textwidth}
			\begin{center}
				\begin{tikzpicture}[scale=.3, yscale=-1]
					\foreach \x/\y in {0/0, 5/0}{
						\draw[very thick, color=lightgray] (\x, \y) rectangle (\x + 5, \y + 5);
					}
					\draw (3,3)--(8,3);
					\draw (1,1)--(6,1);
					\node[closed] at (2,2) {};
					\node[closed] at (4,4) {};
					\node[open2, regular polygon,regular polygon sides=3] at (6,1) {};
					\node[open2] at (8,3) {};
					\node[white] at (5,10) {};
				\end{tikzpicture}
			\end{center}
			\caption{The hollow triangle represents the location of the hollow dot which is required.}
			\label{figure:bad-stuff-1}
		\endminipage\hfill
		\minipage{0.5\textwidth}
			\begin{center}
				\begin{tikzpicture}[scale=.3, yscale=-1]
					\foreach \x/\y in {0/0, 5/0, 5/5}{
						\draw[very thick, color=lightgray] (\x, \y) rectangle (\x + 5, \y + 5);
					}
					\draw (5.9,1)--(5.9,5.9);
					\draw (7.6,2.6)--(7.6,7.6);
					\node[closed] at (2,2) {};
					\node[closed] at (4,4) {};
					\node[zball, minimum size=.15cm] at (5.9,5.9) {};
					\node[closed] at (6.4,1.4) {};
					\node[open2] at (7.6,7.6) {};
					\node[closed] at (8.4,3.4) {};
				\end{tikzpicture}
			\end{center}
			\caption{The hollow square represents the location of the hollow dot which is forbidden.}
			\label{figure:bad-stuff-2}
		\endminipage
		\hfill\hfill
\end{figure}

The recurrence can be fixed by modifying $s_2$ to require that the uppermost hollow dot is added; moreover, we represent it by a $z$ instead of $y$ so that we can handle it separately in the third cell. Thus we have
	\[s_2(x,y,z) = xz + x^2yz + x^3y^2z + \cdots = \f{xz}{1-xy}.\]
To prevent adding the leftmost optional hollow dot in the third cell, we substitute $z = x + x^2y + x^3y^2 + \cdots$, so that
	\[s_3(x,y) = s_2\pa{x, \f{x(1+y)}{1-xy}, \f{x}{1-xy}}.\]
The correct generating function is then found as before by taking the limit as $n \to \infty$ of the recurrence
	\[s_{n+1}(x,y) = s_n\pa{x, \f{x(1+y)}{1-xy}}.\]

Whereas the recurrence above started in the northwest corner and proceeded southeast, our recurrence will instead begin in the ``middle'' of the permutation and proceed outward in two directions simultaneously so that the permutations at every intermediate step are involutions. When we place a hollow dot outside of the initial cell, we of course also must place its inverse image in a different cell. However, in the generating functions that we build, we count only the first of these hollow dots. Then, to build an involution, when we fill each hollow dot with permutation entries in the next step of the recurrence, each such entry is counted by $x^2$ to account for both the entry and its inverse image. In other words, hollow dots on one side of the fixed point are ignored until they become permutation entries. Due to this, the substitution $y = x(y+1)/(1-xy)$ used in~\cite{albert:generating-and-:} becomes $y = x^2(y+1)/(1-x^2y)$.

\begin{figure}
  \centering
  \begin{tikzpicture}[scale=.15]

    % draw the outer boxes, using a loop
    \foreach \x/\y in {0/0}
    \draw[very thick, color=lightgray] 
    (\x, \y) -- (\x + 10, \y) -- (\x + 10, \y + 10) -- (\x, \y + 10) -- cycle;
    % dotted line through diagonal
    \draw[dashed] (0,0) -- (10,10);

    % draw the center y dot
    \node[open_y] at (5,5) {};
    \node[fill=none] at (5,-12.5) {$s^{(1)}_1$};
  \end{tikzpicture}
  \hspace{2em}
  \begin{tikzpicture}[scale=.15]
    % draw the outer boxes, using a loop
    \foreach \x/\y in {-10/0, 0/0, 0/-10}
    \draw[very thick, color=lightgray] 
    (\x, \y) -- (\x + 10, \y) -- (\x + 10, \y + 10) -- (\x, \y + 10) -- cycle;
    % dashed line through diagonal
    \draw[dashed] (-9,-9) -- (11,11);

    % closed dots
    \foreach \i in {-4, -2, 0, 2, 4}{
      \node[closed] at (5-\i,5+\i) {};
    }
    % open dots
    \foreach \i in {1,3}{
      \draw (-5-\i,5+\i) -- (5.5-\i, 5+\i);
      \node[open] at (-5-\i, 5+\i) {};
    }
    \foreach \i in {-1,-3}{
      \draw (5-\i,-5+\i) -- (5-\i, 5.5+\i);
      \node[open_y] at (5-\i, -5+\i) {};
    }
    \draw (2,-2) -- (2, 8.5);
    \draw (-2,2) -- (8.5,2);
    \node[zball] at (2,-2) {};
    \node[zball] at (-2,2) {};

    \node[fill=none] at (5,-12.5) {$s^{(1)}_2$};
  \end{tikzpicture}
  \hspace{2em}
  \begin{tikzpicture}[scale=.15] % s3
    % draw the outer boxes, using a loop
    \foreach \x/\y in {-10/10, -10/0, 0/0, 0/-10, 10/-10}
    \draw[very thick, color=lightgray] 
    (\x, \y) -- (\x + 10, \y) -- (\x + 10, \y + 10) -- (\x, \y + 10) -- cycle;
    % dashed line through diagonal
    \draw[dashed] (-9,-9) -- (18,18);

    % closed dots
    \foreach \i in {-4, -2, 0, 2, 4}{
      \node[closed] at (5-\i,5+\i) {};
    }
    \foreach \i in {1,-3.3,-2.7, 2.7,3.3}{
      \node[closed] at (-5-\i, 5+\i) {};
    }
    \foreach \i in {2.7,3.3, -1,-2.7, -3.3}{
      \node[closed] at (5-\i, -5+\i) {};
    }

    % open dots
    \foreach \i in {-3, 3}{
      \draw (15-\i, -4.8+\i) -- (5.3-\i, -4.8+\i);
      \draw (-4.8+\i, 15-\i) -- (-4.8+\i, 5.3-\i);
      \node[open_y] at (15-\i, -4.8+\i) {};
      \node[open] at (-4.8+\i, 15-\i) {};
      
    }
    % extra open dot
    \draw (15,-5) -- (5.5, -5);
    \draw (-5,15) -- (-5, 5.5);
    \node[open_y] at (15,-5) {};
    \node[open] at (-5,15) {};
    
    \node[fill=none] at (5,-12.5) {$s^{(1)}_3$};

  \end{tikzpicture}

  \caption{Three stages of the recurrence, in the case when the single fixed point is a right-to-left maximum.}
  \label{fig-three-stages-10}
\end{figure}
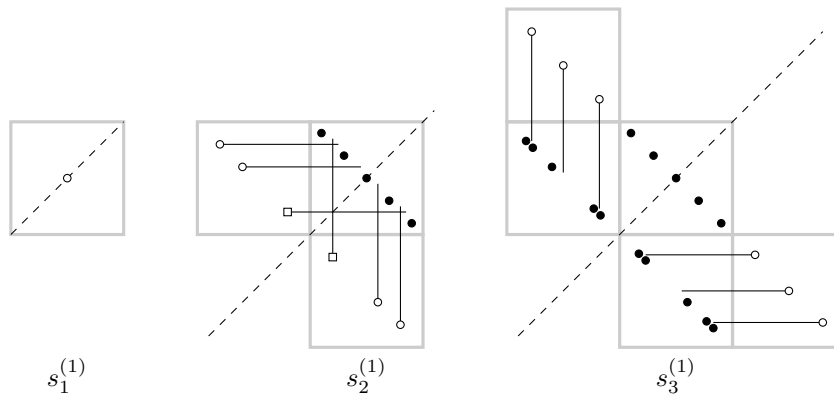

We begin by giving a detailed analysis of the case where $\sigma$ has precisely one fixed point (and hence must be of odd length). This analysis is accompanied by Figure~\ref{fig-three-stages-10}. We first find the generating function $s^{(1)}(x)$ which counts these permutations by length alone, and then refine it to obtain $\widehat{s}^{(1)}(u,v)$.

	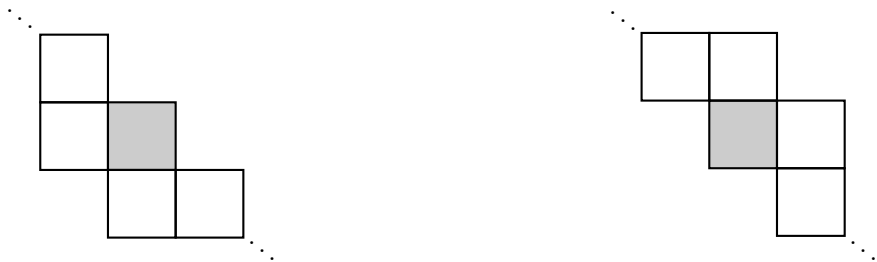
\begin{figure}
	\centering
		\begin{subfigure}[b]{0.43\textwidth}
			\begin{center}
				\begin{tikzpicture}[scale=.45]
					\draw[thick,fill=lightgray] (0,0) rectangle (2,2);
					\draw[thick] (-2,0) rectangle (0,2);
					\draw[thick] (0,0) rectangle (2,-2);
					\draw[thick] (-2,2) rectangle (0,4);
					\draw[thick] (2,-2) rectangle (4,0);
					\node at (-2.6,4.7) {$\ddots$};
					\node at (4.55,-2.15) {$\ddots$};
				\end{tikzpicture}
			\end{center}	
			\label{figure:one-fixed-point-shapes-1}
		\end{subfigure}
		\hspace*{.08\textwidth}
		\begin{subfigure}[b]{0.43\textwidth}
			\begin{center}
				\begin{tikzpicture}[scale=.45,cm={-1,0,0,-1,(0,0)}]
					\draw[thick,fill=lightgray] (0,0) rectangle (2,2);
					\draw[thick] (-2,0) rectangle (0,2);
					\draw[thick] (0,0) rectangle (2,-2);
					\draw[thick] (-2,2) rectangle (0,4);
					\draw[thick] (2,-2) rectangle (4,0);
					\node at (-2.55,4.2) {$\ddots$};
					\node at (4.55,-2.6) {$\ddots$};
				\end{tikzpicture}
			\end{center}	
			\label{figure:one-fixed-point-shapes-2}
		\end{subfigure}
		\caption{The diagrams on which we can draw simple permutations $\sigma \in \Av^I(123)$ that contain a single fixed point, depending on whether the fixed point is a right-to-left maximum or a left-to-right maximum. The starting point of the recurrence is the shaded cell.}
		\label{figure:one-fixed-point-shapes}
	\end{figure}

The single fixed point in $\sigma$ may be either a right-to-left maximum or a left-to-right minimum, but not both because $\sigma$ is not skew decomposable. These two cases are depicted in Figure~\ref{figure:one-fixed-point-shapes}. Because reflection across the anti-diagonal is a bijection between these two cases, we may restrict our attention to the case where the fixed point is a right-to-left maximum (and then multiply by $2$ at the end). The simplest way to define greediness in this context is to focus exclusively on the cells below and to the right of the initial cell (which has already been defined). Greediness for these cells is defined analogously to the non-involution case, and determines greediness for the cells above and to the left of the initial cell.

Suppose that the initial cell (which contains the fixed point) contains a total of $2k+1$ entries. It follows that $k$ of these entries lie below and to the right of the fixed point. Because $\sigma$ is simple, each of the $2k$ adjacent pairs of entries in this cell must be separated by entries in the cell below, in cell to the left, or by entries in both locations. Each adjacent pair lying above and to the left of the fixed point has a corresponding adjacent pair (its image under inversion) which lies below and to the right of the fixed point; if we split the former to the left, then the inverse image of the separating entry splits the latter below, and vice versa.

We can split all $2k$ of these adjacent pairs with as few as $2k$ hollow dots, $k$ in the cell below and $k$ in the cell to the left. The number of ways to split these pairs in this minimal way is $2^k$, because it suffices to choose which of each two corresponding pairs of entries is split below. Of course these adjacent pairs can also be split using more hollow dots. In general, the number of ways to have $k+i$ hollow dots in the cell below is given by $2^{k-i}{k \choose i}$, since we first choose which of the $i$ corresponding pairs of gaps between entries are split both to the left and below, then we choose which of each of the remaining $k-i$ corresponding pairs are split below.

The only problem we can have in this construction is if the resulting permutation is skew decomposable. This occurs precisely when we split precisely the pairs of entries to the right of the fixed point by entries below the initial cell, as shown in Figure~\ref{figure:bad-case}. We compensate for these ``bad cases'' by subtracting the term $x/(1 - x^2y)$.

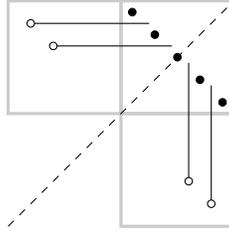
\begin{figure}
\centering
  \begin{tikzpicture}[scale=.15]
    % draw the outer boxes, using a loop
    \foreach \x/\y in {-10/0, 0/0, 0/-10}{
      \draw[very thick, color=lightgray] (\x, \y) rectangle (\x + 10, \y + 10);
    }
    % dotted line through diagonal
    \draw[dashed] (-10,-10) -- (10,10);

    % closed dots
    \foreach \i in {-4, -2, 0, 2, 4}{
      \node[closed] at (5-\i,5+\i) {};
    }
    % open dots
    \foreach \i in {1,3}{
      \draw (-5-\i,5+\i) -- (5.5-\i, 5+\i);
      \node[open] at (-5-\i, 5+\i) {};
    }
    \foreach \i in {-1,-3}{
      \draw (5-\i,-5+\i) -- (5-\i, 5.5+\i);
      \node[open_y] at (5-\i, -5+\i) {};
    }
  \end{tikzpicture}
  \caption{An example of a bad placement of splitting entries that leads to a skew decomposable permutation.}
  \label{figure:bad-case}
\end{figure}

It follows that
	\begin{eqnarray*}
	s_2^{(1)}(x,y,z)
	&=&
	\frac{2z}{y}\left(\sum_{k=0}^\infty \left(x^{2k+1}\sum_{i=0}^k 2^{k-i}{k\choose i}y^{k+i}\right) - \frac{x}{1-x^2y}\right)\\
	&=&
	\frac{2x^3z(1+y)}{(1-x^2y)(1-2x^2y-x^2y^2)}.
	\end{eqnarray*}
	% s2 := 2*z*(x^2*y + x^2) / (x^4*y^3 + (2*x^4 - x^2)*y^2 - 3*x^2*y + 1);

The $2$ in $s_2^{(1)}$ accounts for both cases shown in Figure~\ref{figure:one-fixed-point-shapes}, while the $z/y$ factor counts the topmost hollow dot in the cell below the fixed point by $z$ instead of $y$. By our definition of greediness, this topmost hollow dot is not allowed to lie below an entry to its right in the next cell. Therefore, when substituting for $z$ to obtain $s_3^{(1)}$, we substitute $x^2/(1-x^2y)$ instead of $x^2(1+y)/(1-x^2y)$. As such, we obtain

	\[
	s_3^{(1)}(x,y)=s_2^{(1)}\left(x, \frac{x^2(1+y)}{1-x^2y}, \frac{x^2}{1-x^2y}\right).
	% s3 := subs({y = x^2*(1+y) / (1 - x^2*y), z = x*(1 - x^2*y)}, s2);
	\]

After finding $s_3^{(1)}$, all later $s_n^{(1)}$ are easy to compute:
	\[
	s_{n+1}^{(1)}(x,y)=s_n^{(1)}\left(x,\frac{x^2(y+1)}{1-x^2y}\right).
	\]
	
To find $s^{(1)}$, we substitute the fixed point
	\[
	y=\frac{1-x^2-\sqrt{1-2x^2-3x^4}}{2x^2},
	% s := subs(y = (1 - x^2 - sqrt(1-2*x^2-3*x^4)) / 2*x^2, s3);
	\]
which satisfies $y=x^2(y+1)/(1-x^2y)$ into $s_3^{(1)}$, obtaining
	\begin{eqnarray*}
	s^{(1)}(x) &=& \frac{2x^5(1+x^2+\sqrt{1 - 2x^2 - 3x^4})}{
	        (1+x^2)^2 \left( 1 -3x^2 + (1-2x^2) \sqrt{1 - 2x^2 - 3x^4}\right)} \\
	  &=& 2x^5 + 2x^7 + 10x^9 + 22x^{11} + 68x^{13} + 184x^{15} + 530x^{17} +
	        1502x^{19} + \cdots .
	\end{eqnarray*}

Next we refine $s^{(1)}$ to count simple $123$-avoiding involutions with a single fixed point by their number of left-to-right minima and right-to-left maxima. We assume that the fixed point is a right-to-left maximum, and so all entries in the first cell are right-to-left maxima and the entries in the cells directly below and to the left are left-to-right minima. We call the generating function for these involutions $\widehat{s}^{(1)}(u,v)$; the generating function for the case where the fixed point is a left-to-right minimum is given by $\widehat{s}^{(1)}(v,u)$. Adjusting our formula above, using $u$ and $y_u$ to represent filled and hollow dots (respectively) which are left-to-right minima and $v$ and $y_v$ to represent filled and hollow dots (respectively) which are right-to-left maxima, we obtain 
	\begin{eqnarray*}
	\widehat{s}_2^{(1)}(u,v,y_u,y_v,z)
	&=&
	\frac{z}{y_u}\left(\sum_{k=0}^\infty \left(v^{2k+1}\sum_{i=0}^k 2^{k-i}{k\choose i}y_u^{k+i}\right) - \frac{v}{1-v^2y_u}\right)\\
	&=&
	\frac{v^3z(y_u+1)}{(1-v^2y_u)(1-2v^2y_u-v^2y_u^2)}.
	\end{eqnarray*}
Occurrences of $y_u$ in this generating function account for hollow dots which will become left-to-right minima when filled. Thus $\widehat{s}_3^{(1)}$ is obtained by the substitution
	\[
	\widehat{s}_3^{(1)}(u,v,y_u,y_v)
	=
	\widehat{s}_2^{(1)}\left(u,v,\frac{u^2(1+y_v)}{1-u^2y_v},\frac{v^2(1+y_u)}{1-v^2y_u},\frac{u^2}{1-u^2y_v}\right).
	\]
% s3 := subs({yu = u^2*(1+yv) / (1-u^2*yv), z = u^2/(1-u^2*yv)}, s2);
The general substitution rule is
	\[
	\widehat{s}_{n+1}^{(1)}(u,v,y_u,y_v)
	=
	\widehat{s}_n^{(1)}\left(u,v,\frac{u^2(1+y_v)}{1-u^2y_v},\frac{v^2(1+y_u)}{1-v^2y_u}\right).
	\]
Note that $\widehat{s}^{(1)}_3$ depends only on $u$, $v$, and $y_v$. Hence, to compute $\widehat{s}^{(1)}$ we need only substitute the solution of
	\[
	y_v
	=
	\frac{v^2(1+y_u)}{1-v^2y_u}
	=
	\frac{v^2(1+u^2)}{1-u^2v^2-u^2y_v-u^2v^2y_v}
	\]
into $\widehat{s}_3^{(1)}$. This solution is
	\[
	y_v
	=
	\frac{1-u^2v^2-\sqrt{1-6u^2v^2-4u^2v^4-4u^4v^2-3u^4v^4}}{2u^2(1+v^2)},
% yv := (1 - u^2*v^2 - sqrt(1 - 6*u^2*v^2 - 4*u^2*v^4 - 4*u^4*v^2 - 3*u^4*v^4)) / (2*u^2*(1+v^2));
	\]
and upon substituting this into $\widehat{s}_3^{(1)}$ we obtain
	\[
	\widehat{s}^{(1)}(u,v)
	=
	\frac{u^2v^3(1+u^2)(1+2v^2+u^2v^2+r)}
	{(1+v^2)(1-6u^2v^2-4u^2v^4-4u^4v^2-3u^4v^4+(1-3u^2v^2-2u^4v^2)r)}
	%
	% r := sqrt(1-6*u^2*v^2-4*u^2*v^4-4*u^4*v^2-3*u^4*v^4);
	% s := (u^2*v^3*(1+u^2)*(1+2*v^2+u^2*v^2+r)) / ((1+v^2)*(1-6*u^2*v^2-4*u^2*v^4-4*u^4*v^2-3*u^4*v^4+(1-3*u^2*v^2-2*u^4*v^2)*r));
	%
	\]
where
	\[
	r=\sqrt{1-6u^2v^2-4u^2v^4-4u^4v^2-3u^4v^4}.
	\]

For the next two computations, of $s^{(0)}$ and $s^{(2)}$, we explain only how to count the permutations according to length, as the refinements of these enumerations to count left-to-right minima and right-to-left maxima are analogous to the case of $s^{(1)}$. When enumerating $s^{(1)}$ we assumed (by symmetry) that the fixed point was a right-to-left maximum, and thus our initial cell was an upper-right corner of the staircase decomposition. When counting $s^{(0)}$ we of course do not have a fixed point, and for $s^{(2)}$ one fixed point will be a right-to-left maximum while the other will be a left-to-right minimum, so we must adjust this convention. In the case of $s^{(0)}$ we assume that the diagonal line about which inversion reflects the permutation passes through the middle of an upper-right corner of the staircase decomposition, and we take this cell to be our initial cell. In the case of $s^{(2)}$, we choose our initial cell in this case to contain the right-to-left maximum fixed point, and slightly tweak our conventions by considering the other fixed point to also lie in the southwest corner of this cell.

We begin by showing that
	\begin{eqnarray*}
	s^{(0)}_2(x,y,z)
	&=&
	\frac{z}{y}\left(\sum_{k=1}^\infty\left(x^{2k}\sum_{i=0}^{k-1} 2^{k-i-1}{k-1\choose i}y^{k+i}\right) - \frac{x^2y}{1-x^2y}\right)\\
	&=&
	\frac{x^4yz(1+y)}{(1-x^2y)(1-2x^2y-x^2y^2)}.
	% s2 := x^4*y*z*(1+y)/(1-x^2*y)/(1-x^2y^2-2*x^2*y);
	% s3 := subs(y=x^2*(1+y)/(1-x^2*y), z=x^2/(1-x^2*y), s2);
	% s := subs(y = (1 - x^2 - sqrt(1-2*x^2-3*x^4)) / (2*x^2), s3);
	\end{eqnarray*}
The justification of this equality is almost identical to the justification of $s^{(1)}_2$; in particular the same adjustment is required to avoid producing a skew decomposable permutation. Adjacent pairs are split in the same manner as the $s^{(1)}_2$ case, except that for $s^{(0)}_2$ we must separate the central adjacent pair, and because we are building an involution, this means that it must be split both by an entry in the cell below and by an entry in the cell to the left. (We do not need to multiply by 2 because in the case of zero (or two) fixed points, every such permutation can be drawn on both shapes in Figure~\ref{figure:one-fixed-point-shapes}.) We then produce subsequent $s^{(0)}_n$ and thus also $s^{(0)}(x)$ in the same manner as the one-fixed-point case, leading to
	\begin{eqnarray*}
	s^{(0)}(x)
	&=&
	\frac{2x^6(1+x^2-\sqrt{1-2x^2-3x^4})}{2-2x^2-10x^4-6x^6+(2-6x^4-4x^6)\sqrt{1-2x^2-3x^4}}\\
	% s := (2*x^6 + 2*x^8 - 2*x^6*sqrt(1-2*x^2-3*x^4)) / (2-2*x^2-10*x^4-6*x^6+(2-6*x^4-4*x^6)*sqrt(1-2*x^2-3*x^4))
	&=&
	x^8+2x^{10}+8x^{12}+22x^{14}+68x^{16}+198x^{18}+586x^{20}+\cdots.
	\end{eqnarray*}
The corresponding bivariate generating function is then
	\[
	\widehat{s}^{(0)}(u,v)
	=
	\frac{2u^2v^4(1+u^2)(1+2u^2+u^2v^2-r)}
	{(1-u^2v^2+r)(1-6u^2v^2-4u^2v^4-4u^4v^2-3u^4v^4+(1+2v^2+u^2v^2)r)},
	\]
	%
	% r := sqrt(1-6*u^2*v^2-4*u^2*v^4-4*u^4*v^2-3*u^4*v^4);
	% s := (2*u^2*v^4*(1+u^2)*(1+2*u^2+u^2*v^2-r)) / ((1-u^2*v^2+r)*(1-6*u^2*v^2-4*u^2*v^4-4*u^4*v^2-3*u^4*v^4+(1+2*v^2+u^2*v^2)*r));
	%
where $r$ is the same radical as before.

Finally, we move on to counting simple $123$-avoiding involutions with two fixed points. The presence of a second fixed point allows us to include the former ``bad cases'' which we had previously excluded, as the resulting permutation will no longer be skew decomposable. Furthermore, the entry which was previously marked by $z$ to prevent adding an entry above it in the cell to its right is now allowed to have this entry because the fixed point which is a left-to-right minimum prevents this added point from violating greediness.

Instead, we are now allowed to insert an entry in the cell below the initial cell which lies immediately to the right of the leftmost fixed point. However, if we choose to insert such an entry then we are required have an entry above it in the cell to its right (to prevent the inserted entry from forming an interval with the leftmost fixed point). So, if this entry exists we mark it by $w$ instead of $y$. Then, in calculating $s^{(2)}_3$, we substitute $x^2y/(1-x^2y)$ for $w$ instead of $x^2(1+y)/(1-x^2y)$. It follows that $s^{(2)}_2$ is counted by the generating function
	\begin{eqnarray*}
	s^{(2)}_2(x,y,w)
	&=&
	x^2(1+w)\left(\sum_{k=0}^\infty x^{2k}\sum_{i=0}^k 2^{k-i}{k\choose i}y^{k+i}\right)-x^2\\
	&=&
	\frac{x^2(w+2x^2y+x^2y^2)}{1-2x^2y-x^2y^2}.
	\end{eqnarray*}
while $s^{(2)}_3$ is
	\[
	s_3^{(2)}(x,y)=s_2^{(2)}\left(x, \frac{x^2(1+y)}{1-x^2y}, \frac{x^2y}{1-x^2y}\right).
	% s3 := subs({y = x^2*(1+y) / (1 - x^2*y), z = x*(1 - x^2*y)}, s2);
	\]
All later $s_n^{(2)}$ and then $s^{(2)}$ itself are produced as in the previous cases, leading to
	\begin{eqnarray*}
	s^{(2)}(x)
	&=&
	\frac{x^4(2+5x^2+3x^4-(2+x^2)\sqrt{1-2x^2-3x^4})}
	{1-x^2-5x^4-3x^6+(1+2x^2+x^4)\sqrt{1-2x^2-3x^4}}\\
	&=&
	3x^6+4x^8+15x^{10}+36x^{12}+105x^{14}+288x^{16}+819x^{18}+\cdots.
	% s := (2*x^4+5*x^6+3*x^8-(2*x^4+x^6)*sqrt(1-2*x^2-3*x^4)) / (1-x^2-5*x^4-3*x^6+(1+2*x^2+x^4)*sqrt(1-2*x^2-3*x^4));
	\end{eqnarray*}
Accounting for right-to-left maxima and left-to-right minima separately, we obtain
	\[
	\widehat{s}^{(2)}(u,v)
	=
	\frac{uv^3\pa{2+7u^2+4u^2v^2+4u^4+3u^4v^2-(2+u^2)r}}
	{1-6u^2v^2-4u^2v^4-4u^4v^2-3u^4v^4+(1+2v^2+u^2v^2)r},
	\]
	%
	% s := -(3*u^4*v^2+4*u^4+4*u^2*v^2-(-3*u^4*v^4-4*u^2*v^4-4*u^4*v^2-6*u^2*v^2+1)^(1/2)*u^2+7*u^2-2*(-3*u^4*v^4-4*u^2*v^4-4*u^4*v^2-6*u^2*v^2+1)^(1/2)+2)*u*v^3/(3*u^4*v^4+4*u^4*v^2+4*u^2*v^4-(-3*u^4*v^4-4*u^2*v^4-4*u^4*v^2-6*u^2*v^2+1)^(1/2)*u^2*v^2+6*u^2*v^2-2*(-3*u^4*v^4-4*u^2*v^4-4*u^4*v^2-6*u^2*v^2+1)^(1/2)*v^2-(-3*u^4*v^4-4*u^2*v^4-4*u^4*v^2-6*u^2*v^2+1)^(1/2)-1)

where $r$ is the same radical as in the previous two cases.

\section{Involutions Avoiding $1342$}
\label{sec-1342}

Obviously, every involution avoiding $1342$ must also avoid $1342^{-1}=1423$. Our first goal in this section is to show that the simple permutations in the class $\Av(1342, 1423)$ all avoid $123$. Thanks to a result in the literature, we are able to prove this result quite easily. Given a class $\C$, we define its \emph{substitution closure}, $\langle\C\rangle$, to be the largest class with the same simple permutations as $\C$.

In their investigation of substitution closures of principal classes, Atkinson, Ru\v{s}kuc, and Smith~\cite{atkinson:substitution-cl:} showed that very few of these substitution closures are finitely based.  Fortunately for us, $\langle\Av(123)\rangle$ is an exception: by bounding the length of potential basis elements of this class and then conducting an exhaustive computer search, they established that
\[
	\left\langle\Av(123)\right\rangle
	=
	\Av(24153, 25314, 31524, 41352, 246135, 415263).
\]
By inspection, it is clear that each of the basis elements of $\langle\Av(123)\rangle$ contains either $1342$ or $1423$, and thus $\Av(1342,1423)\subseteq\langle\Av(123)\rangle$. Going in the other direction, it follows trivially that every $123$-avoiding simple permutation avoids both $1342$ and $1423$. Thus we have the following result.

\begin{proposition}
\label{prop-1342-simple-involutions}
The simple permutations of $\Av(1342,1423)$ are precisely the same as the simple permutations of $\Av(123)$.
\end{proposition}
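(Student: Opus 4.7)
The strategy is essentially laid out in the paragraph preceding the proposition, so I would execute it as a clean two-direction argument, with all the real work quoted from Atkinson--Ru\v{s}kuc--Smith.

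First, I would establish the inclusion $\Av(1342,1423) \subseteq \langle\Av(123)\rangle$. By the ARS result, this reduces to the finite check that each of the six basis elements $24153$, $25314$, $31524$, $41352$, $246135$, $415263$ contains $1342$ or $1423$. This is a direct inspection; for instance the subsequences $2453$ of $24153$, $2534$ of $25314$, $1524$ of $31524$, $1352$ of $41352$, $2463$ of $246135$, $1526$ of $415263$ realize one of the two patterns in each case. Once this inclusion is known, every simple permutation of $\Av(1342,1423)$ lies in $\langle\Av(123)\rangle$, and by the defining property of the substitution closure such a simple permutation must already be a simple permutation of $\Av(123)$.

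For the reverse containment of simples, I would simply note that both $1342$ and $1423$ contain $123$ (e.g.\ the subsequences $134$ and $123$ respectively). Consequently $\Av(123) \subseteq \Av(1342,1423)$, so any simple permutation of $\Av(123)$ is also a simple permutation of $\Av(1342,1423)$.

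Combining the two directions gives equality of the sets of simple permutations. No step here is a real obstacle, since the hard lifting has been done by Atkinson--Ru\v{s}kuc--Smith; the only thing to be careful about is the bookkeeping of pattern containment in the six explicit basis elements of $\langle\Av(123)\rangle$, which is purely a finite verification.
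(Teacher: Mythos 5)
Your argument is correct and is essentially the paper's own proof: reduce to the Atkinson--Ru\v{s}kuc--Smith basis of $\langle\Av(123)\rangle$, verify that each of the six basis elements contains $1342$ or $1423$ to get $\Av(1342,1423)\subseteq\langle\Av(123)\rangle$ (so its simples are simples of $\Av(123)$), and use $123\le 1342$ and $123\le 1423$ for the reverse inclusion. One correction to your finite check: the subsequence $1526$ of $415263$ forms a $1324$ pattern, not $1342$ or $1423$; take instead $1523$ (a $1423$ pattern) or $1563$ (a $1342$ pattern).
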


To enumerate the set $\Av^I(1342)$, we combine the generating functions of Section~\ref{sec-123} with the unique decompositions introduced in Section~\ref{sec-simples}. We retain our conventions from Section~\ref{sec-simples} by defining $f$ to be the generating function for the class $\Av(1342,1423)$ and $f_\oplus$ (resp., $f_\ominus$) the generating function for the sum (resp., skew) decomposable permutations of this class. We then define $g$ to be the generating function for the set $\Av^I(1342)$ and $g_\oplus$ (resp., $g_\ominus$) the generating function for the sum (resp., skew) decomposable $1342$-avoiding involutions.

First we describe the sum decomposable permutations $\pi=\alpha_1\oplus\alpha_2$ counted by $g_\oplus$. By Proposition~\ref{simple-decomp-unique}, we can assure the uniqueness of this decomposition by requiring that $\alpha_1$ is sum indecomposable. To produce an involution, $\alpha_1$ and $\alpha_2$ must be involutions as well. In order for $\pi$ to avoid the patterns $1342$ and $1423$, it is necessary and sufficient that $\alpha_1$ avoids these patterns and that $\alpha_2$ avoids the patterns $231$ and $312=231^{-1}$.

In fact, the class $\Av(231,312)$, known as the class of \emph{layered permutations}, consists entirely of involutions because a permutation lies in $\Av(231,312)$ if and only if it can be expressed as a sum of some number of decreasing permutations. The layered permutations of length $n$ are in bijection with compositions of $n$, and hence there are $2^{n-1}$ permutations of length $n$ in $\Av(231,312)$. Therefore, $g_\oplus$ satisfies the equation
	\[g_\oplus = \pa{g - g_\oplus}\pa{\f{x}{1-2x}},\]
from which it follows that
	\begin{equation}
	\label{eq-1342-1}
	g_\oplus = \f{gx}{1-x}.
	\end{equation}

Next we must briefly consider the class $\Av(1342,1423)$. Kremer~\cite{kremer:permutations-wi:, kremer:postscript:-per:} showed that this class is counted by the large Schr\"oder numbers, \OEIS{A006318}, and has generating function
	\[f(x) = \f{1-x-\sqrt{1-6x+x^2}}{2}.\]
Since the class $\Av(1342,1423)$ is skew closed (because both $1342$ and $1423$ are skew indecomposable), it follows by Proposition~\ref{involution-decomp-2} that
	\[f_\ominus = (f - f_\ominus)f,\]
and thus
	\[f_\ominus = \f{f^2}{1+f},\]
so
	\[f - f_\ominus = \f{f}{1+f} = \f{1+x-\sqrt{1-6x+x^2}}{4},\]
the generating function for the small Schr\"oder numbers, \OEIS{A001003}.

Returning to $\Av^I(1342)$, we see that skew decomposable permutations in this set are of the form $\alpha_1\ominus\alpha_2\ominus\alpha_1^{-1}$ where $\alpha_1$ is a skew indecomposable member of $\Av(1342,1423)$ and $\alpha_2$ is an arbitrary (and possibly empty) member of $\Av^I(1342)$. Therefore we see that
	\begin{equation}
	\label{eq-1342-2}
	g_\ominus = \pa{f(x^2)-f_\ominus(x^2)}(1+g).
	\end{equation}
	
Lastly, we must enumerate $1342$-avoiding involutions which are inflations of simple permutations of length at least four. Any such simple permutation must have at least two right-to-left maxima and by simplicity every right-to-left maximum must have some entry both below it and to the left. Hence to avoid creating a copy of $1342$ or $1423$, we may only inflate right-to-left maxima by decreasing intervals. An entry which is a left-to-right minimum can be inflated by any permutation in the class $\Av(1342,1432)$. However, to ensure that the inflated permutation is an involution, we must inflate each fixed point by an involution. Additionally, if we inflate the entry with value $\sigma(i)$ by the permutation $\alpha$, we must make sure to inflate the entry with value $i$ by $\alpha^{-1}$.

Consider $\widehat{s}^{(0)}(u,v)$, which is the generating function for simple involutions of length at least four which avoid $123$ and have zero fixed points. To inflate each right-to-left maximum by a decreasing permutation in a way that yields an involution, we substitute
	\[v^2 = \f{x^2}{1-x^2},\]
because if $\sigma(i)$ is a right-to-left maximum of the simple $123$-avoiding involution $\sigma$ then the entry with value $i$ will also be a right-to-left maximum, and we must substitute a permutation and its inverse into this pair of entries of $\sigma$. Because the class $\Av(1342,1423)$ is counted by the large Schr\"oder numbers, the inflations of the simple involutions of length at least four with zero fixed points are counted by
	\begin{equation}
	\label{eq-1342-3}
	\eval{\widehat{s}^{(0)}(u,v)}_{u^2=f(x^2),\;v^2=x^2/(1-x^2)}.
	\end{equation}
	
Recall that $\widehat{s}^{(1)}(u,v)$ counts only those simple involutions whose single fixed point is a right-to-left maximum. Since this fixed point must be inflated by a decreasing permutation, we count inflations of such permutations by
	\begin{equation}
	\label{eq-1342-4}
	\pa{\eval{\f{\widehat{s}^{(1)}(u,v)}{v}}_{u^2=f(x^2),\;v^2=x^2/(1-x^2)}}
	\cdot\f{x}{1-x}.
	\end{equation}
To count those simple involutions whose single fixed point is a left-to-right minimum, we need only swap $u$ and $v$. Thus, inflations of these are counted by the generating function
	\begin{equation}
	\label{eq-1342-5}
	\pa{\eval{\f{\widehat{s}^{(1)}(v,u)}{u}}_{u^2=f(x^2),\;v^2=x^2/(1-x^2)}}\cdot g.
	\end{equation}
Finally, we must account for inflations of those simple involutions which contain exactly two fixed points, one of which is a right-to-left maximum while the other is a left-to-right minimum. These permutations are counted by 
	\begin{equation}
	\label{eq-1342-6}
	\pa{\eval{\f{\widehat{s}^{(2)}(u,v)}{uv}}_{u^2=f(x^2),\;v^2=x^2/(1-x^2)}}\cdot\f{gx}{1-x}.
	\end{equation}
By summing the contributions of \eqref{eq-1342-1}--\eqref{eq-1342-6} and accounting for the single permutation of length $1$, one finds that
	\[g(x) = \f{x\pa{1-2x+x^2+\sqrt{1-6x^2+x^4}}}{2\pa{1-3x+x^2}}.\]
	% a := x*(1-2*x+x^2+sqrt(1-6*x^2 + x^4))/(2*(1-3*x+x^2));
It can then be computed that the growth rate of involutions avoiding $1342$ is $1$ plus the golden ratio,
	\[1+\f{1+\sqrt{5}}{2} \approx 2.62.\]

\section{Involutions Avoiding $2341$}
\label{sec-2341}

Again we begin by noting that every involution avoiding $2341$ must also avoid $2341^{-1}=4123$. Unlike the case when avoiding $1342$, the simple permutations of $\Av(2341,4123)$ are a proper superset of the simple permutations of $\Av(123)$. However, when we restrict our attention to involutions, there is only one simple involution which avoids $2341$ and $4123$ but contains $123$.

\begin{theorem}
\label{thm-2341-simple-involutions}
The simple $2341$-avoiding involutions consist exactly of the permutation $5274163$ along with the simple involutions of the set $\Av^I(123)$.
\end{theorem}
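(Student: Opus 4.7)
The plan is to establish both containments. For the easy direction, note first that $\Av^I(123)\subseteq\Av^I(2341)$, since $123$ is a sub-pattern of $2341$. It remains to check by direct inspection that $5274163$ is a simple involution avoiding $2341$ (and hence also $4123=2341^{-1}$, as involutions and their inverses contain the same patterns); its only $123$-occurrence is at positions $2,4,6$, all three of which are fixed points.

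For the converse, let $\sigma$ be a simple involution in $\Av^I(2341)=\Av^I(2341,4123)$ that contains $123$, and fix an occurrence at positions $a<b<c$ with values $A=\sigma(a)<B=\sigma(b)<C=\sigma(c)$. Because $\sigma$ is an involution, the positions $A<B<C$ carry the values $a<b<c$, yielding a second $123$-occurrence. The first key step is to show that $\{a,b,c\}=\{A,B,C\}$, which (since both listings are increasing) forces $a=A$, $b=B$, $c=C$, making the three entries fixed points of $\sigma$. This is done by case analysis on $k=|\{a,b,c\}\cap\{A,B,C\}|$. When $k<3$, the union of positions supports a sub-permutation on up to six points that, after examining the finitely many possible relative orderings of $a,b,c,A,B,C$, always contains $2341$ or $4123$. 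The cleanest example is the disjoint case with $c<A$: the six positions carry the sub-permutation $456123$, whose sub-positions $1,4,5,6$ form $4123$. The partial-overlap cases $k\in\{1,2\}$ and the remaining interleavings of the disjoint case are disposed of by analogous ``find a forbidden sub-pattern'' arguments, using the fact that any of the listed configurations places three large values before a smaller one, or vice versa.

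Once the three fixed points $a<b<c$ forming a $123$ are secured, the avoidance conditions sharply constrain the rest of $\sigma$. From $4123$-avoidance, every position $p<a$ satisfies $\sigma(p)\le c$; from $2341$-avoidance, every position $p>c$ satisfies $\sigma(p)\ge a$. The involution then pairs these outside entries with entries between $a$ and $c$ in the symmetric way, and simplicity forbids proper intervals. Crucially, no two of $a,b,c$ can be adjacent (else they form a length-two interval with consecutive values), the position immediately to the left of $a$ must carry a value exceeding $b$ (otherwise the initial block forms an interval with the fixed point at $a$), and a mirror statement holds at the right of $c$; similar forced choices control the slots between $a,b$ and between $b,c$. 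Working through this bounded case check forces $|\sigma|=7$, $(a,b,c)=(2,4,6)$, and the two swaps to be $(1,5)$ and $(3,7)$, so $\sigma=5274163$.

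The main obstacle is the first step of the converse, where the $123$-occurrence is shown to consist of three fixed points: the bookkeeping over the various orderings of $\{a,b,c\}\cup\{A,B,C\}$ (and the sub-cases $k=0,1,2$) is the most delicate part of the proof. Once that structural fact is in place, the combined leverage of the two avoidance conditions, the involution symmetry, and simplicity is tight enough to pin down $\sigma=5274163$ through a short finite case analysis.
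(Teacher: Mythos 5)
There is a genuine gap, and it sits exactly where you acknowledge the difficulty: the claim that any $123$-occurrence in a simple $2341$-avoiding involution must consist of three fixed points cannot be established by the six-point argument you propose. Your method looks only at the occurrence $(a,A),(b,B),(c,C)$ together with its inverse image $(A,a),(B,b),(C,c)$ and claims that whenever $\{a,b,c\}\neq\{A,B,C\}$ these points contain $2341$ or $4123$. That is false as a statement about the six points: for example the configuration $a<A$, $b=B$, $c=C$ (order isomorphic to $2134$, realized by the involution $2134$ itself), or $a=A$, $c=C$, $b<B$ (order isomorphic to $1324$), contains neither $2341$ nor $4123$. So the cases $k\in\{1,2\}$, and in fact several interleavings of the disjoint case, cannot be ``disposed of by analogous find-a-forbidden-sub-pattern arguments''; only configurations like your $456123$ example die this way. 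Any correct proof of the fixed-point reduction must use simplicity (and typically a carefully chosen extremal occurrence), not just pattern containment among the six entries. This is precisely what the paper does: it fixes the occurrence with the `$3$' topmost, the `$1$' bottommost, the `$2$' rightmost, and then in its Cases 2--4 rules out each entry being a non-fixed-point by lengthy permutation-diagram arguments in which simplicity forces splitting entries of rectangular hulls, the involution symmetry forces the mirror-image splittings, and the resulting configurations are shown to be sum decomposable or to contain an unsplittable interval. None of that structure is recoverable from your six points alone.

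Your outer frame is otherwise sound: the easy containment is correct (including the check on $5274163$), and your endgame after assuming the three entries are fixed points parallels the paper's Case~1 (there, $4123$- and $2341$-avoidance empty out certain cells, simplicity forces splitting entries above/right and below/left of the hull of the `$2$' and `$3$', and the remaining free cells would create intervals, pinning down $5274163$). But as written that part is also only a sketch, and the heart of the theorem --- eliminating the possibility that $\sigma(i)$, $\sigma(j)$, or $\sigma(k)$ fails to be a fixed point --- is missing a workable argument.
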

\begin{proof}
To prove this statement, we must consider several possible cases relating to the fixed points of a $2341$-avoiding simple involution. To assist in visualizing these arguments, we depict permutations by using \emph{permutation diagrams}, which consist of a permutation plotted on top of a grid of cells. A cell is white if we are allowed to insert a new entry into that cell without creating an occurrence of $2341$, a cell is light gray if we specifically forbid any entries to be placed in that cell, and a cell is dark gray if inserting an entry into that cell would create an occurrence of $2341$. 
	
	Define the \emph{rectangular hull} of a set $S$ of points (in our case, entries of a permutation plotted on the plane) to be the smallest axis-parallel rectangle which contains all points of $S$.
	
	Let $\sigma$ be a $2341$-avoiding simple involution and note that $\sigma$ must also avoid $2341^{-1} = 4123$. If $\sigma \in \Av(123)$, then there is nothing to prove. Thus, suppose that $123 \leq \sigma$. Choose the occurrence of $123$ for which the `$3$' is topmost possible entry, the `$1$' is the bottommost possible entry for the chosen `$3$', and the `$2$' is the rightmost possible entry for the chosen `$1$' and `$3$'. Let $i < j < k$ be the position of these entries, so that the entries forming the chosen $123$ pattern are $\sigma(i) < \sigma(j) < \sigma(k)$.
	
	By these assumptions, $\sigma$ can be drawn on the permutation diagram shown in Figure~\ref{figure:6_1-1}. Note that the three shown entries, from left to right, are $\sigma(i)$, $\sigma(j)$, and $\sigma(k)$. Though this figure makes it seem like these three entries are all fixed points, this is misleading. Because each white cell could contain many entries, we must consider separate cases in which some combination of these three entries are fixed points. We will show that in one case we must have $\sigma = 5274163$, and in the other cases no such $\sigma$ can exist.
	
	\bigskip
	
	\textbf{Case 1:} \emph{$\sigma(i)$, $\sigma(j)$, and $\sigma(k)$ are all fixed points}
	
	Assume that $\sigma(i)$, $\sigma(j)$, and $\sigma(k)$ are all fixed points. Then, since $\sigma$ is an involution, there cannot be any entries in cells $A$, $B$, or $C$. To see this, suppose there were an entry in cell $A$, for example. Then, in order for $\sigma$ to be an involution, there must be an entry in cell $\widebar{A}$. It must again be noted that this argument is only valid because $\sigma(i)$ is assumed in this case to be a fixed point. Therefore, $\sigma$ has the permutation diagram shown in Figure~\ref{figure:6_1-2}. 
	
	\begin{figure}
	\begin{footnotesize}
	\centering
		\hfill\hfill
		\begin{subfigure}[b]{0.13\textwidth}
			\begin{center}
				%Tikz output
\definecolor{light-gray}{gray}{0.65}
\definecolor{dark-gray}{gray}{0.35}\begin{tikzpicture}[scale=.45]
%Forbidden regions
\filldraw[light-gray](0,0) rectangle (1,1);
\filldraw[dark-gray](0,3) rectangle (1,4);
\filldraw[light-gray](1,0) rectangle (2,1);
\filldraw[light-gray](2,1) rectangle (3,2);
\filldraw[light-gray](2,2) rectangle (3,3);
\filldraw[light-gray](2,3) rectangle (3,4);
\filldraw[dark-gray](3,0) rectangle (4,1);
\filldraw[light-gray](3,3) rectangle (4,4);
%Points
\draw[black, fill=black] (1,1) circle (0.2);
\draw[black, fill=black] (2,2) circle (0.2);
\draw[black, fill=black] (3,3) circle (0.2);
%Gridlines
\draw[thick](0,0)--(0,4);
\draw[thick](0,0)--(4,0);
\draw[thick](1,0)--(1,4);
\draw[thick](0,1)--(4,1);
\draw[thick](2,0)--(2,4);
\draw[thick](0,2)--(4,2);
\draw[thick](3,0)--(3,4);
\draw[thick](0,3)--(4,3);
\draw[thick](4,0)--(4,4);
\draw[thick](0,4)--(4,4);
\node at (.5,1.5) {$A$};
\node at (1.5,2.5) {$B$};
\node at (3.5,2.5) {$C$};
\node at (1.5,.5) {$\widebar{A}$};
\end{tikzpicture}

			\end{center}	
			\caption{}
			\label{figure:6_1-1}
		\end{subfigure}
		\hfill
		\begin{subfigure}[b]{0.13\textwidth}
			\begin{center}
				%Tikz output
\definecolor{light-gray}{gray}{0.65}
\definecolor{dark-gray}{gray}{0.35}\begin{tikzpicture}[scale=.45]
%Forbidden regions
\filldraw[light-gray](0,0) rectangle (1,1);
\filldraw[light-gray](0,1) rectangle (1,2);
\filldraw[dark-gray](0,3) rectangle (1,4);
\filldraw[light-gray](1,0) rectangle (2,1);
\filldraw[light-gray](1,2) rectangle (2,3);
\filldraw[light-gray](2,1) rectangle (3,2);
\filldraw[light-gray](2,2) rectangle (3,3);
\filldraw[light-gray](2,3) rectangle (3,4);
\filldraw[dark-gray](3,0) rectangle (4,1);
\filldraw[light-gray](3,2) rectangle (4,3);
\filldraw[light-gray](3,3) rectangle (4,4);
%Points
\draw[black, fill=black] (1,1) circle (0.2);
\draw[black, fill=black] (2,2) circle (0.2);
\draw[black, fill=black] (3,3) circle (0.2);
%Gridlines
\draw[thick](0,0)--(0,4);
\draw[thick](0,0)--(4,0);
\draw[thick](1,0)--(1,4);
\draw[thick](0,1)--(4,1);
\draw[thick](2,0)--(2,4);
\draw[thick](0,2)--(4,2);
\draw[thick](3,0)--(3,4);
\draw[thick](0,3)--(4,3);
\draw[thick](4,0)--(4,4);
\draw[thick](0,4)--(4,4);
\end{tikzpicture}

			\end{center}	
			\caption{}
			\label{figure:6_1-2}
		\end{subfigure}
		\hfill
		\begin{subfigure}[b]{0.3\textwidth}
			\begin{center}
				%Tikz output
\definecolor{light-gray}{gray}{0.65}
\definecolor{dark-gray}{gray}{0.35}\begin{tikzpicture}[scale=.45]
%Forbidden regions
\filldraw[light-gray](0,0) rectangle (1,1);
\filldraw[dark-gray](0,1) rectangle (1,2);
\filldraw[dark-gray](0,2) rectangle (1,3);
\filldraw[dark-gray](0,3) rectangle (1,4);
\filldraw[dark-gray](0,4) rectangle (1,5);
\filldraw[light-gray](0,5) rectangle (1,6);
\filldraw[dark-gray](0,6) rectangle (1,7);
\filldraw[dark-gray](0,7) rectangle (1,8);
\filldraw[dark-gray](1,0) rectangle (2,1);
\filldraw[dark-gray](1,1) rectangle (2,2);
\filldraw[light-gray](1,2) rectangle (2,3);
\filldraw[dark-gray](1,3) rectangle (2,4);
\filldraw[dark-gray](1,5) rectangle (2,6);
\filldraw[dark-gray](1,6) rectangle (2,7);
\filldraw[dark-gray](1,7) rectangle (2,8);
\filldraw[dark-gray](2,0) rectangle (3,1);
\filldraw[light-gray](2,1) rectangle (3,2);
\filldraw[dark-gray](2,2) rectangle (3,3);
\filldraw[dark-gray](2,3) rectangle (3,4);
\filldraw[dark-gray](2,4) rectangle (3,5);
\filldraw[dark-gray](2,5) rectangle (3,6);
\filldraw[dark-gray](2,6) rectangle (3,7);
\filldraw[light-gray](2,7) rectangle (3,8);
\filldraw[dark-gray](3,0) rectangle (4,1);
\filldraw[dark-gray](3,1) rectangle (4,2);
\filldraw[dark-gray](3,2) rectangle (4,3);
\filldraw[dark-gray](3,3) rectangle (4,4);
\filldraw[light-gray](3,4) rectangle (4,5);
\filldraw[dark-gray](3,5) rectangle (4,6);
\filldraw[dark-gray](3,7) rectangle (4,8);
\filldraw[dark-gray](4,0) rectangle (5,1);
\filldraw[dark-gray](4,2) rectangle (5,3);
\filldraw[light-gray](4,3) rectangle (5,4);
\filldraw[dark-gray](4,4) rectangle (5,5);
\filldraw[dark-gray](4,5) rectangle (5,6);
\filldraw[dark-gray](4,6) rectangle (5,7);
\filldraw[dark-gray](4,7) rectangle (5,8);
\filldraw[light-gray](5,0) rectangle (6,1);
\filldraw[dark-gray](5,1) rectangle (6,2);
\filldraw[dark-gray](5,2) rectangle (6,3);
\filldraw[dark-gray](5,3) rectangle (6,4);
\filldraw[dark-gray](5,4) rectangle (6,5);
\filldraw[dark-gray](5,5) rectangle (6,6);
\filldraw[light-gray](5,6) rectangle (6,7);
\filldraw[dark-gray](5,7) rectangle (6,8);
\filldraw[dark-gray](6,0) rectangle (7,1);
\filldraw[dark-gray](6,1) rectangle (7,2);
\filldraw[dark-gray](6,2) rectangle (7,3);
\filldraw[dark-gray](6,4) rectangle (7,5);
\filldraw[light-gray](6,5) rectangle (7,6);
\filldraw[dark-gray](6,6) rectangle (7,7);
\filldraw[dark-gray](6,7) rectangle (7,8);
\filldraw[dark-gray](7,0) rectangle (8,1);
\filldraw[dark-gray](7,1) rectangle (8,2);
\filldraw[light-gray](7,2) rectangle (8,3);
\filldraw[dark-gray](7,3) rectangle (8,4);
\filldraw[dark-gray](7,4) rectangle (8,5);
\filldraw[dark-gray](7,5) rectangle (8,6);
\filldraw[dark-gray](7,6) rectangle (8,7);
\filldraw[light-gray](7,7) rectangle (8,8);
%Points
\draw[black, fill=black] (1,5) circle (0.2);
\draw[black, fill=black] (2,2) circle (0.2);
\draw[black, fill=black] (3,7) circle (0.2);
\draw[black, fill=black] (4,4) circle (0.2);
\draw[black, fill=black] (5,1) circle (0.2);
\draw[black, fill=black] (6,6) circle (0.2);
\draw[black, fill=black] (7,3) circle (0.2);
%Gridlines
\draw[thick](0,0)--(0,8);
\draw[thick](0,0)--(8,0);
\draw[thick](1,0)--(1,8);
\draw[thick](0,1)--(8,1);
\draw[thick](2,0)--(2,8);
\draw[thick](0,2)--(8,2);
\draw[thick](3,0)--(3,8);
\draw[thick](0,3)--(8,3);
\draw[thick](4,0)--(4,8);
\draw[thick](0,4)--(8,4);
\draw[thick](5,0)--(5,8);
\draw[thick](0,5)--(8,5);
\draw[thick](6,0)--(6,8);
\draw[thick](0,6)--(8,6);
\draw[thick](7,0)--(7,8);
\draw[thick](0,7)--(8,7);
\draw[thick](8,0)--(8,8);
\draw[thick](0,8)--(8,8);
\end{tikzpicture}

			\end{center}	
			\caption{}
			\label{figure:6_1-3}
		\end{subfigure}
		\hfill\hfill
		\caption{Permutation diagrams corresponding to Case 1 in the proof of Theorem~\ref{thm-2341-simple-involutions}.}
		\label{figure:6_1-group-1}
	\end{footnotesize}
	\end{figure}
	
	Since $\sigma$ is simple, the rectangular hull of $\sigma(j)$ and $\sigma(k)$ has a separating entry in either the white cell above it or the white cell to its right. The fact that $\sigma$ is an involution forces there to be splitting entries in both of these cells. We choose to plot the splitting entry in the white cell above this rectangular hull that is the topmost possible entry and the splitting entry in the white cell to the right of this rectangular hull that is the rightmost possible entry. At the same time, the rectangular hull of $\sigma(j)$ and $\sigma(k)$ must similarly be split both below and to the left. This gives the permutation diagram depicted in Figure~\ref{figure:6_1-3}.
	
	There are only four remaining cells where entries can be inserted. However, since no two of these cells share a row or column any entry in one of these cells would be part of a proper interval, contradicting the simplicity of $\sigma$. This shows that in this case the only permutation that can be obtained is $5274163$.
	
	\bigskip
	
	\textbf{Case 2:} \emph{$\sigma(k)$ is not a fixed point}
	
	Suppose that $\pi(k)$ is not a fixed point. It must lie either above or below the ``reflection line''. In other words, it must either be either above and to the left of its inverse image or below and to the right of its inverse image. Suppose first that it is below the reflection line. Then, it must have its inverse image somewhere above and to its left. We see that there is only one place to put such an entry. The result is shown in Figure~\ref{figure:6_1-4}.
	
	We next observe a general fact about involutions.
	
	\begin{fact}
	\label{fact:inv-img}
	If two entries form an inversion (resp., a non-inversion), then their inverse images also form an inversion (resp., a non-inversion).
	\end{fact}
	
	For this reason, the third entry from the left shown in Figure~\ref{figure:6_1-4} (which was the `$2$' in the original $123$) cannot lie above the reflection line, nor can it be a fixed point, from which it follows that this entry lies below the reflection line. Its inverse image can only lie in one particular white cell, as shown in Figure~\ref{figure:6_1-5}. If the leftmost entry in this figure were a fixed point, then the permutation would start with the entry $1$, violating simplicity. Therefore, this entry has an inverse image above it and to its left. This yields Figure~\ref{figure:6_1-6}. If the entries contained in the six white cells in the bottom-left corner of Figure~\ref{figure:6_1-6} did not form an interval, then they would have to be split by either an entry above them or an entry to their right; either splitting would create a copy of $2341$ or $4123$. Therefore we can eliminate this case.
		
	\begin{figure}
	\begin{footnotesize}
	\centering
		\begin{subfigure}[b]{0.2\textwidth}
			\begin{center}
				%Tikz output
\definecolor{light-gray}{gray}{0.65}
\definecolor{dark-gray}{gray}{0.35}\begin{tikzpicture}[scale=.4]
%Forbidden regions
\filldraw[light-gray](0,0) rectangle (1,1);
\filldraw[dark-gray](0,3) rectangle (1,4);
\filldraw[dark-gray](0,4) rectangle (1,5);
\filldraw[light-gray](1,0) rectangle (2,1);
\filldraw[dark-gray](2,0) rectangle (3,1);
\filldraw[dark-gray](2,1) rectangle (3,2);
\filldraw[light-gray](3,1) rectangle (4,2);
\filldraw[dark-gray](3,2) rectangle (4,3);
\filldraw[light-gray](3,3) rectangle (4,4);
\filldraw[light-gray](3,4) rectangle (4,5);
\filldraw[dark-gray](4,0) rectangle (5,1);
\filldraw[dark-gray](4,3) rectangle (5,4);
\filldraw[light-gray](4,4) rectangle (5,5);
%Points
\draw[black, fill=black] (1,1) circle (0.2);
\draw[black, fill=black] (2,4) circle (0.2);
\draw[black, fill=black] (3,2) circle (0.2);
\draw[black, fill=black] (4,3) circle (0.2);
%Gridlines
\draw[thick](0,0)--(0,5);
\draw[thick](0,0)--(5,0);
\draw[thick](1,0)--(1,5);
\draw[thick](0,1)--(5,1);
\draw[thick](2,0)--(2,5);
\draw[thick](0,2)--(5,2);
\draw[thick](3,0)--(3,5);
\draw[thick](0,3)--(5,3);
\draw[thick](4,0)--(4,5);
\draw[thick](0,4)--(5,4);
\draw[thick](5,0)--(5,5);
\draw[thick](0,5)--(5,5);
\end{tikzpicture}

			\end{center}	
			\caption{}
			\label{figure:6_1-4}
		\end{subfigure}
		\hfill
		\begin{subfigure}[b]{0.2\textwidth}
			\begin{center}
				%Tikz output
\definecolor{light-gray}{gray}{0.65}
\definecolor{dark-gray}{gray}{0.35}\begin{tikzpicture}[scale=.4]
%Forbidden regions
\filldraw[light-gray](0,0) rectangle (1,1);
\filldraw[dark-gray](0,2) rectangle (1,3);
\filldraw[dark-gray](0,3) rectangle (1,4);
\filldraw[dark-gray](0,4) rectangle (1,5);
\filldraw[dark-gray](0,5) rectangle (1,6);
\filldraw[light-gray](1,0) rectangle (2,1);
\filldraw[dark-gray](1,2) rectangle (2,3);
\filldraw[dark-gray](1,3) rectangle (2,4);
\filldraw[dark-gray](2,0) rectangle (3,1);
\filldraw[dark-gray](2,1) rectangle (3,2);
\filldraw[dark-gray](2,4) rectangle (3,5);
\filldraw[dark-gray](3,0) rectangle (4,1);
\filldraw[dark-gray](3,1) rectangle (4,2);
\filldraw[dark-gray](3,5) rectangle (4,6);
\filldraw[dark-gray](4,0) rectangle (5,1);
\filldraw[light-gray](4,1) rectangle (5,2);
\filldraw[dark-gray](4,2) rectangle (5,3);
\filldraw[light-gray](4,3) rectangle (5,4);
\filldraw[light-gray](4,4) rectangle (5,5);
\filldraw[dark-gray](4,5) rectangle (5,6);
\filldraw[dark-gray](5,0) rectangle (6,1);
\filldraw[dark-gray](5,3) rectangle (6,4);
\filldraw[dark-gray](5,4) rectangle (6,5);
\filldraw[light-gray](5,5) rectangle (6,6);
%Points
\draw[black, fill=black] (1,1) circle (0.2);
\draw[black, fill=black] (2,4) circle (0.2);
\draw[black, fill=black] (3,5) circle (0.2);
\draw[black, fill=black] (4,2) circle (0.2);
\draw[black, fill=black] (5,3) circle (0.2);
%Gridlines
\draw[thick](0,0)--(0,6);
\draw[thick](0,0)--(6,0);
\draw[thick](1,0)--(1,6);
\draw[thick](0,1)--(6,1);
\draw[thick](2,0)--(2,6);
\draw[thick](0,2)--(6,2);
\draw[thick](3,0)--(3,6);
\draw[thick](0,3)--(6,3);
\draw[thick](4,0)--(4,6);
\draw[thick](0,4)--(6,4);
\draw[thick](5,0)--(5,6);
\draw[thick](0,5)--(6,5);
\draw[thick](6,0)--(6,6);
\draw[thick](0,6)--(6,6);
\end{tikzpicture}

			\end{center}	
			\caption{}
			\label{figure:6_1-5}
		\end{subfigure}
		\hfill
		\begin{subfigure}[b]{0.2\textwidth}
			\begin{center}
				%Tikz output
\definecolor{light-gray}{gray}{0.65}
\definecolor{dark-gray}{gray}{0.35}\begin{tikzpicture}[scale=.4]
%Forbidden regions
\filldraw[light-gray](0,0) rectangle (1,1);
\filldraw[dark-gray](0,3) rectangle (1,4);
\filldraw[dark-gray](0,4) rectangle (1,5);
\filldraw[dark-gray](0,5) rectangle (1,6);
\filldraw[dark-gray](0,6) rectangle (1,7);
\filldraw[light-gray](1,0) rectangle (2,1);
\filldraw[dark-gray](1,3) rectangle (2,4);
\filldraw[dark-gray](1,4) rectangle (2,5);
\filldraw[dark-gray](1,5) rectangle (2,6);
\filldraw[dark-gray](1,6) rectangle (2,7);
\filldraw[light-gray](2,0) rectangle (3,1);
\filldraw[dark-gray](2,3) rectangle (3,4);
\filldraw[dark-gray](2,4) rectangle (3,5);
\filldraw[dark-gray](3,0) rectangle (4,1);
\filldraw[dark-gray](3,1) rectangle (4,2);
\filldraw[dark-gray](3,2) rectangle (4,3);
\filldraw[dark-gray](3,5) rectangle (4,6);
\filldraw[dark-gray](4,0) rectangle (5,1);
\filldraw[dark-gray](4,1) rectangle (5,2);
\filldraw[dark-gray](4,2) rectangle (5,3);
\filldraw[dark-gray](4,6) rectangle (5,7);
\filldraw[dark-gray](5,0) rectangle (6,1);
\filldraw[dark-gray](5,1) rectangle (6,2);
\filldraw[light-gray](5,2) rectangle (6,3);
\filldraw[dark-gray](5,3) rectangle (6,4);
\filldraw[light-gray](5,4) rectangle (6,5);
\filldraw[light-gray](5,5) rectangle (6,6);
\filldraw[dark-gray](5,6) rectangle (6,7);
\filldraw[dark-gray](6,0) rectangle (7,1);
\filldraw[dark-gray](6,1) rectangle (7,2);
\filldraw[dark-gray](6,4) rectangle (7,5);
\filldraw[dark-gray](6,5) rectangle (7,6);
\filldraw[light-gray](6,6) rectangle (7,7);
%Points
\draw[black, fill=black] (1,2) circle (0.2);
\draw[black, fill=black] (2,1) circle (0.2);
\draw[black, fill=black] (3,5) circle (0.2);
\draw[black, fill=black] (4,6) circle (0.2);
\draw[black, fill=black] (5,3) circle (0.2);
\draw[black, fill=black] (6,4) circle (0.2);
%Gridlines
\draw[thick](0,0)--(0,7);
\draw[thick](0,0)--(7,0);
\draw[thick](1,0)--(1,7);
\draw[thick](0,1)--(7,1);
\draw[thick](2,0)--(2,7);
\draw[thick](0,2)--(7,2);
\draw[thick](3,0)--(3,7);
\draw[thick](0,3)--(7,3);
\draw[thick](4,0)--(4,7);
\draw[thick](0,4)--(7,4);
\draw[thick](5,0)--(5,7);
\draw[thick](0,5)--(7,5);
\draw[thick](6,0)--(6,7);
\draw[thick](0,6)--(7,6);
\draw[thick](7,0)--(7,7);
\draw[thick](0,7)--(7,7);
\end{tikzpicture}

			\end{center}	
			\caption{}
			\label{figure:6_1-6}
		\end{subfigure}
		\hfill
		\begin{subfigure}[b]{0.2\textwidth}
			\begin{center}
		%Tikz output
\definecolor{light-gray}{gray}{0.65}
\definecolor{dark-gray}{gray}{0.35}\begin{tikzpicture}[scale=.4]
%Forbidden regions
\filldraw[light-gray](0,0) rectangle (1,1);
\filldraw[dark-gray](0,3) rectangle (1,4);
\filldraw[dark-gray](0,4) rectangle (1,5);
\filldraw[light-gray](1,0) rectangle (2,1);
\filldraw[light-gray](2,1) rectangle (3,2);
\filldraw[light-gray](2,2) rectangle (3,3);
\filldraw[light-gray](2,3) rectangle (3,4);
\filldraw[light-gray](2,4) rectangle (3,5);
\filldraw[dark-gray](3,0) rectangle (4,1);
\filldraw[light-gray](3,4) rectangle (4,5);
\filldraw[dark-gray](4,0) rectangle (5,1);
\filldraw[light-gray](4,4) rectangle (5,5);
%Points
\draw[black, fill=black] (1,1) circle (0.2);
\draw[black, fill=black] (2,2) circle (0.2);
\draw[black, fill=black] (3,4) circle (0.2);
\draw[black, fill=black] (4,3) circle (0.2);
%Gridlines
\draw[thick](0,0)--(0,5);
\draw[thick](0,0)--(5,0);
\draw[thick](1,0)--(1,5);
\draw[thick](0,1)--(5,1);
\draw[thick](2,0)--(2,5);
\draw[thick](0,2)--(5,2);
\draw[thick](3,0)--(3,5);
\draw[thick](0,3)--(5,3);
\draw[thick](4,0)--(4,5);
\draw[thick](0,4)--(5,4);
\draw[thick](5,0)--(5,5);
\draw[thick](0,5)--(5,5);
\node at (4.5,3.5) {$D$};
\node at (3.5,4.5) {$\widebar{D}$};
\node at (3.5,2.5) {$E$};
\node at (2.5,3.5) {$\widebar{E}$};
\node at (1.5,3.5) {$\widebar{E}'$};
\end{tikzpicture}
\hfill\hfill
			\end{center}	
			\caption{}
			\label{figure:6_1-7a}
		\end{subfigure}
				\caption{Permutation diagrams corresponding to Case 2 in the proof of Theorem~\ref{thm-2341-simple-involutions}.}
		\label{figure:6_1-group-2}
	\end{footnotesize}
	\end{figure}

		\begin{figure}
	\centering
				\hfill
		\begin{subfigure}[b]{0.3\textwidth}
			\begin{center}
				%Tikz output
\definecolor{light-gray}{gray}{0.65}
\definecolor{dark-gray}{gray}{0.35}\begin{tikzpicture}[scale=.4]
%Forbidden regions
\filldraw[light-gray](0,0) rectangle (1,1);
\filldraw[dark-gray](0,3) rectangle (1,4);
\filldraw[dark-gray](0,4) rectangle (1,5);
\filldraw[light-gray](1,0) rectangle (2,1);
\filldraw[light-gray](2,1) rectangle (3,2);
\filldraw[light-gray](2,2) rectangle (3,3);
\filldraw[light-gray](2,3) rectangle (3,4);
\filldraw[light-gray](2,4) rectangle (3,5);
\filldraw[dark-gray](3,0) rectangle (4,1);
\filldraw[light-gray](3,2) rectangle (4,3);
\filldraw[light-gray](3,4) rectangle (4,5);
\filldraw[dark-gray](4,0) rectangle (5,1);
\filldraw[light-gray](4,3) rectangle (5,4);
\filldraw[light-gray](4,4) rectangle (5,5);
%Points
\draw[black, fill=black] (1,1) circle (0.2);
\draw[black, fill=black] (2,2) circle (0.2);
\draw[black, fill=black] (3,4) circle (0.2);
\draw[black, fill=black] (4,3) circle (0.2);
%Gridlines
\draw[thick](0,0)--(0,5);
\draw[thick](0,0)--(5,0);
\draw[thick](1,0)--(1,5);
\draw[thick](0,1)--(5,1);
\draw[thick](2,0)--(2,5);
\draw[thick](0,2)--(5,2);
\draw[thick](3,0)--(3,5);
\draw[thick](0,3)--(5,3);
\draw[thick](4,0)--(4,5);
\draw[thick](0,4)--(5,4);
\draw[thick](5,0)--(5,5);
\draw[thick](0,5)--(5,5);
\end{tikzpicture}
			\end{center}	
			\caption{}
			\label{figure:6_1-7}
		\end{subfigure}
		\hfill
		\begin{subfigure}[b]{0.3\textwidth}
			\begin{center}

				%Tikz output
\definecolor{light-gray}{gray}{0.65}
\definecolor{dark-gray}{gray}{0.35}\begin{tikzpicture}[scale=.4]
%Forbidden regions
\filldraw[light-gray](0,0) rectangle (1,1);
\filldraw[dark-gray](0,2) rectangle (1,3);
\filldraw[dark-gray](0,3) rectangle (1,4);
\filldraw[dark-gray](0,4) rectangle (1,5);
\filldraw[dark-gray](0,5) rectangle (1,6);
\filldraw[dark-gray](0,6) rectangle (1,7);
\filldraw[light-gray](1,0) rectangle (2,1);
\filldraw[dark-gray](1,2) rectangle (2,3);
\filldraw[dark-gray](1,3) rectangle (2,4);
\filldraw[dark-gray](1,4) rectangle (2,5);
\filldraw[dark-gray](2,0) rectangle (3,1);
\filldraw[dark-gray](2,1) rectangle (3,2);
\filldraw[dark-gray](2,2) rectangle (3,3);
\filldraw[dark-gray](2,5) rectangle (3,6);
\filldraw[dark-gray](3,0) rectangle (4,1);
\filldraw[dark-gray](3,1) rectangle (4,2);
\filldraw[light-gray](3,2) rectangle (4,3);
\filldraw[dark-gray](3,3) rectangle (4,4);
\filldraw[dark-gray](3,4) rectangle (4,5);
\filldraw[dark-gray](3,5) rectangle (4,6);
\filldraw[light-gray](3,6) rectangle (4,7);
\filldraw[dark-gray](4,0) rectangle (5,1);
\filldraw[dark-gray](4,1) rectangle (5,2);
\filldraw[dark-gray](4,3) rectangle (5,4);
\filldraw[dark-gray](4,6) rectangle (5,7);
\filldraw[dark-gray](5,0) rectangle (6,1);
\filldraw[dark-gray](5,2) rectangle (6,3);
\filldraw[dark-gray](5,3) rectangle (6,4);
\filldraw[dark-gray](5,6) rectangle (6,7);
\filldraw[dark-gray](6,0) rectangle (7,1);
\filldraw[dark-gray](6,4) rectangle (7,5);
\filldraw[dark-gray](6,5) rectangle (7,6);
\filldraw[light-gray](6,6) rectangle (7,7);
%Points
\draw[black, fill=black] (1,1) circle (0.2);
\draw[black, fill=black] (2,5) circle (0.2);
\draw[black, fill=black] (3,3) circle (0.2);
\draw[black, fill=black] (4,6) circle (0.2);
\draw[black, fill=black] (5,2) circle (0.2);
\draw[black, fill=black] (6,4) circle (0.2);
%Gridlines
\draw[thick](0,0)--(0,7);
\draw[thick](0,0)--(7,0);
\draw[thick](1,0)--(1,7);
\draw[thick](0,1)--(7,1);
\draw[thick](2,0)--(2,7);
\draw[thick](0,2)--(7,2);
\draw[thick](3,0)--(3,7);
\draw[thick](0,3)--(7,3);
\draw[thick](4,0)--(4,7);
\draw[thick](0,4)--(7,4);
\draw[thick](5,0)--(5,7);
\draw[thick](0,5)--(7,5);
\draw[thick](6,0)--(6,7);
\draw[thick](0,6)--(7,6);
\draw[thick](7,0)--(7,7);
\draw[thick](0,7)--(7,7);
\end{tikzpicture}
			\end{center}	
			\caption{}
			\label{figure:6_1-8}
		\end{subfigure}
		\hfill\hfill
				\caption{Permutation diagrams corresponding to Case 2 in the proof of Theorem~\ref{thm-2341-simple-involutions}.}
		\label{figure:6_1-group-2b}
	\end{figure}

	We have shown so far that we reach a contradiction if $\sigma(k)$ lies below the reflection line. Now suppose that this entry lies above the reflection line, so that it has an inverse image that lies below it and to the right. If this inverse image lies in the cell which is not immediately below it and to the right, then we end up with a permutation similar to that of Figure~\ref{figure:6_1-5}, yielding the same contradiction as in that case. Therefore, the inverse image of $\sigma(k)$ must lie in the white cell immediately below it and to the right, as in Figure~\ref{figure:6_1-7a}.
	
	Since $\sigma$ is an involution we can forbid placing entries into cells where their inverse image would lie in an already forbidden cell or into cells where their inverse image would create a forbidden pattern. For example, if cell $D$ in Figure~\ref{figure:6_1-7a} contained an entry, then by Fact~\ref{fact:inv-img} there must be an entry in cell $\widebar{D}$, which is not allowed. Additionally, if cell $E$ of the same figure contained an entry, then Fact~\ref{fact:inv-img} implies that either $\widebar{E}$ or $\widebar{E}'$ would contain the inverse image of this entry; each scenario is either forbidden by previous assumptions or else would imply the presence of a forbidden pattern. Hence, we have Figure~\ref{figure:6_1-7}. Now, to preserve simplicity the rectangular hull of the rightmost two entries must be split, and to preserve involutionhood it must in fact be split both below and to the left, yielding Figure~\ref{figure:6_1-8}. We are now in a situation similar to Figure~\ref{figure:6_1-6} in that any permutation built from this permutation diagram (with respect to involution and the forbidden patterns) will be sum decomposable and hence not simple. Hence, it is not possible that $\sigma(k)$ is not a fixed point.
	
	\bigskip
	
	\textbf{Case 3:} \emph{$\sigma(j)$ is not a fixed point}
	
	 If we assume that $\sigma(j)$ lies above the reflection line, then there are two possible locations for its inverse image. An application of Fact~\ref{fact:inv-img} leads us to a situation identical to Figure~\ref{figure:6_1-5} in one case and a situation identical to a 180 degree rotation of Figure~\ref{figure:6_1-5} in the other case. Therefore, $\sigma(j)$ must lie below the reflection line. For almost identical reasons, the inverse image of $\sigma(j)$ can lie in only one particular white cell. Figure~\ref{figure:6_1-9} shows the resulting image; note the several additional cells are greyed out because presence of entries in these cells would force the presence of an inverse image in an already forbidden cell.
	 
	\begin{figure}
	\begin{footnotesize}
	\centering
		\hfill
		\begin{subfigure}[b]{0.15\textwidth}
			\begin{center}
				%Tikz output
\definecolor{light-gray}{gray}{0.65}
\definecolor{dark-gray}{gray}{0.35}\begin{tikzpicture}[scale=.4]
%Forbidden regions
\filldraw[light-gray](0,0) rectangle (1,1);
\filldraw[dark-gray](0,4) rectangle (1,5);
\filldraw[light-gray](1,0) rectangle (2,1);
\filldraw[light-gray](2,0) rectangle (3,1);
\filldraw[light-gray](2,3) rectangle (3,4);
\filldraw[light-gray](3,1) rectangle (4,2);
\filldraw[light-gray](3,2) rectangle (4,3);
\filldraw[light-gray](3,3) rectangle (4,4);
\filldraw[light-gray](3,4) rectangle (4,5);
\filldraw[dark-gray](4,0) rectangle (5,1);
\filldraw[light-gray](4,3) rectangle (5,4);
\filldraw[light-gray](4,4) rectangle (5,5);
\filldraw[light-gray](1,3) rectangle (2,4);
%Points
\draw[black, fill=black] (1,1) circle (0.2);
\draw[black, fill=black] (2,3) circle (0.2);
\draw[black, fill=black] (3,2) circle (0.2);
\draw[black, fill=black] (4,4) circle (0.2);
%Gridlines
\draw[thick](0,0)--(0,5);
\draw[thick](0,0)--(5,0);
\draw[thick](1,0)--(1,5);
\draw[thick](0,1)--(5,1);
\draw[thick](2,0)--(2,5);
\draw[thick](0,2)--(5,2);
\draw[thick](3,0)--(3,5);
\draw[thick](0,3)--(5,3);
\draw[thick](4,0)--(4,5);
\draw[thick](0,4)--(5,4);
\draw[thick](5,0)--(5,5);
\draw[thick](0,5)--(5,5);
\end{tikzpicture}

			\end{center}	
			\caption{}
			\label{figure:6_1-9}
		\end{subfigure}
		\hfill
		\begin{subfigure}[b]{0.15\textwidth}
			\begin{center}
				%Tikz output
\definecolor{light-gray}{gray}{0.65}
\definecolor{dark-gray}{gray}{0.35}\begin{tikzpicture}[scale=.4]
%Forbidden regions
\filldraw[light-gray](0,0) rectangle (1,1);
\filldraw[light-gray](0,1) rectangle (1,2);
\filldraw[light-gray](0,2) rectangle (1,3);
\filldraw[dark-gray](0,4) rectangle (1,5);
\filldraw[light-gray](1,0) rectangle (2,1);
\filldraw[light-gray](1,3) rectangle (2,4);
\filldraw[light-gray](2,0) rectangle (3,1);
\filldraw[light-gray](2,3) rectangle (3,4);
\filldraw[light-gray](3,1) rectangle (4,2);
\filldraw[light-gray](3,2) rectangle (4,3);
\filldraw[light-gray](3,3) rectangle (4,4);
\filldraw[light-gray](3,4) rectangle (4,5);
\filldraw[dark-gray](4,0) rectangle (5,1);
\filldraw[light-gray](4,3) rectangle (5,4);
\filldraw[light-gray](4,4) rectangle (5,5);
%Points
\draw[black, fill=black] (1,1) circle (0.2);
\draw[black, fill=black] (2,3) circle (0.2);
\draw[black, fill=black] (3,2) circle (0.2);
\draw[black, fill=black] (4,4) circle (0.2);
%Gridlines
\draw[thick](0,0)--(0,5);
\draw[thick](0,0)--(5,0);
\draw[thick](1,0)--(1,5);
\draw[thick](0,1)--(5,1);
\draw[thick](2,0)--(2,5);
\draw[thick](0,2)--(5,2);
\draw[thick](3,0)--(3,5);
\draw[thick](0,3)--(5,3);
\draw[thick](4,0)--(4,5);
\draw[thick](0,4)--(5,4);
\draw[thick](5,0)--(5,5);
\draw[thick](0,5)--(5,5);
\end{tikzpicture}

			\end{center}	
			\caption{}
			\label{figure:6_1-10}
		\end{subfigure}
		\hfill
		\begin{subfigure}[b]{0.22\textwidth}
			\begin{center}
				%Tikz output
\definecolor{light-gray}{gray}{0.65}
\definecolor{dark-gray}{gray}{0.35}\begin{tikzpicture}[scale=.4]
%Forbidden regions
\filldraw[light-gray](0,0) rectangle (1,1);
\filldraw[dark-gray](0,1) rectangle (1,2);
\filldraw[light-gray](0,2) rectangle (1,3);
\filldraw[light-gray](0,3) rectangle (1,4);
\filldraw[dark-gray](0,6) rectangle (1,7);
\filldraw[dark-gray](1,0) rectangle (2,1);
\filldraw[dark-gray](1,1) rectangle (2,2);
\filldraw[light-gray](1,2) rectangle (2,3);
\filldraw[light-gray](1,3) rectangle (2,4);
\filldraw[dark-gray](1,6) rectangle (2,7);
\filldraw[light-gray](2,0) rectangle (3,1);
\filldraw[light-gray](2,1) rectangle (3,2);
\filldraw[dark-gray](2,2) rectangle (3,3);
\filldraw[dark-gray](2,3) rectangle (3,4);
\filldraw[light-gray](2,4) rectangle (3,5);
\filldraw[light-gray](2,5) rectangle (3,6);
\filldraw[light-gray](3,0) rectangle (4,1);
\filldraw[light-gray](3,1) rectangle (4,2);
\filldraw[dark-gray](3,2) rectangle (4,3);
\filldraw[dark-gray](3,4) rectangle (4,5);
\filldraw[dark-gray](3,5) rectangle (4,6);
\filldraw[dark-gray](3,6) rectangle (4,7);
\filldraw[light-gray](4,2) rectangle (5,3);
\filldraw[dark-gray](4,3) rectangle (5,4);
\filldraw[dark-gray](4,4) rectangle (5,5);
\filldraw[dark-gray](4,5) rectangle (5,6);
\filldraw[dark-gray](4,6) rectangle (5,7);
\filldraw[light-gray](5,2) rectangle (6,3);
\filldraw[dark-gray](5,3) rectangle (6,4);
\filldraw[dark-gray](5,4) rectangle (6,5);
\filldraw[light-gray](5,5) rectangle (6,6);
\filldraw[light-gray](5,6) rectangle (6,7);
\filldraw[dark-gray](6,0) rectangle (7,1);
\filldraw[dark-gray](6,1) rectangle (7,2);
\filldraw[dark-gray](6,3) rectangle (7,4);
\filldraw[dark-gray](6,4) rectangle (7,5);
\filldraw[light-gray](6,5) rectangle (7,6);
\filldraw[light-gray](6,6) rectangle (7,7);
%Points
\draw[black, fill=black] (1,5) circle (0.2);
\draw[black, fill=black] (2,2) circle (0.2);
\draw[black, fill=black] (3,4) circle (0.2);
\draw[black, fill=black] (4,3) circle (0.2);
\draw[black, fill=black] (5,1) circle (0.2);
\draw[black, fill=black] (6,6) circle (0.2);
%Gridlines
\draw[thick](0,0)--(0,7);
\draw[thick](0,0)--(7,0);
\draw[thick](1,0)--(1,7);
\draw[thick](0,1)--(7,1);
\draw[thick](2,0)--(2,7);
\draw[thick](0,2)--(7,2);
\draw[thick](3,0)--(3,7);
\draw[thick](0,3)--(7,3);
\draw[thick](4,0)--(4,7);
\draw[thick](0,4)--(7,4);
\draw[thick](5,0)--(5,7);
\draw[thick](0,5)--(7,5);
\draw[thick](6,0)--(6,7);
\draw[thick](0,6)--(7,6);
\draw[thick](7,0)--(7,7);
\draw[thick](0,7)--(7,7);
\end{tikzpicture}

			\end{center}	
			\caption{}
			\label{figure:6_1-11}
		\end{subfigure}
		\hfill
		\begin{subfigure}[b]{0.15\textwidth}
			\begin{center}
				%Tikz output
\definecolor{light-gray}{gray}{0.65}
\definecolor{dark-gray}{gray}{0.35}\begin{tikzpicture}[scale=.4]
%Forbidden regions
\filldraw[light-gray](0,0) rectangle (1,1);
\filldraw[light-gray](0,1) rectangle (1,2);
\filldraw[light-gray](0,2) rectangle (1,3);
\filldraw[light-gray](0,3) rectangle (1,4);
\filldraw[dark-gray](0,5) rectangle (1,6);
\filldraw[light-gray](1,0) rectangle (2,1);
\filldraw[light-gray](1,4) rectangle (2,5);
\filldraw[dark-gray](1,5) rectangle (2,6);
\filldraw[light-gray](2,0) rectangle (3,1);
\filldraw[light-gray](2,4) rectangle (3,5);
\filldraw[light-gray](3,0) rectangle (4,1);
\filldraw[light-gray](3,4) rectangle (4,5);
\filldraw[light-gray](4,1) rectangle (5,2);
\filldraw[light-gray](4,2) rectangle (5,3);
\filldraw[light-gray](4,3) rectangle (5,4);
\filldraw[light-gray](4,4) rectangle (5,5);
\filldraw[light-gray](4,5) rectangle (5,6);
\filldraw[dark-gray](5,0) rectangle (6,1);
\filldraw[dark-gray](5,1) rectangle (6,2);
\filldraw[light-gray](5,4) rectangle (6,5);
\filldraw[light-gray](5,5) rectangle (6,6);
%Points
\draw[black, fill=black] (1,2) circle (0.2);
\draw[black, fill=black] (2,1) circle (0.2);
\draw[black, fill=black] (3,4) circle (0.2);
\draw[black, fill=black] (4,3) circle (0.2);
\draw[black, fill=black] (5,5) circle (0.2);
%Gridlines
\draw[thick](0,0)--(0,6);
\draw[thick](0,0)--(6,0);
\draw[thick](1,0)--(1,6);
\draw[thick](0,1)--(6,1);
\draw[thick](2,0)--(2,6);
\draw[thick](0,2)--(6,2);
\draw[thick](3,0)--(3,6);
\draw[thick](0,3)--(6,3);
\draw[thick](4,0)--(4,6);
\draw[thick](0,4)--(6,4);
\draw[thick](5,0)--(5,6);
\draw[thick](0,5)--(6,5);
\draw[thick](6,0)--(6,6);
\draw[thick](0,6)--(6,6);
\node at (2.5, 1.5) {$G$};
\node at (3.5, 1.5) {$F$};
\node at (1.5, 2.5) {$\widebar{G}$};
\node at (1.5, 3.5) {$\widebar{F}$};
\end{tikzpicture}

			\end{center}	
			\caption{}
			\label{figure:6_1-12}
		\end{subfigure}
		\hfill
		\hfill
		\caption{Permutation diagrams corresponding to Case 3 in the proof of Theorem~\ref{thm-2341-simple-involutions}.}
		\label{figure:6_1-group-3}
	\end{footnotesize}
	\end{figure}

	If the leftmost entry shown in this diagram is a fixed point, then we have the permutation diagram in Figure~\ref{figure:6_1-10}. To maintain simplicity, there must be an entry in the bottommost white cell whose inverse image is in the leftmost white cell, yielding Figure~\ref{figure:6_1-11}. However, there is now an interval which cannot be split, which is a contradiction. Therefore, the leftmost entry cannot be a fixed point. If the leftmost entry were to lie above the reflection line, then its inverse image would have to be in the bottommost white square of Figure~\ref{figure:6_1-9}. However, this violates Fact~\ref{fact:inv-img} when comparing these two entries to the middle two entries shown in Figure~\ref{figure:6_1-9}. Thus the inverse image of this leftmost entry must lie above it and to the left, and for the same reason, can only lie in the white cell immediately above and to the left, yielding the permutation diagram in Figure~\ref{figure:6_1-12}. There are two ways that we could try to split the rectangular hull of the leftmost two entries. In the first way, we would place an entry in cell $F$ and its inverse image would lie in cell $\widebar{F}$, but this creates an occurrence of both forbidden patterns. In the second way, we would place an entry in cell $G$ and its inverse image would lie in cell $\widebar{G}$, but this creates a permutation which is necessary sum decomposable, and hence not simple.
	
	\textbf{Case 4:} \emph{$\sigma(i)$ is not a fixed point}
	
	When $\sigma(i)$ is not a fixed point, its inverse image lies above it and to the left. In order for $\sigma$ to not be sum decomposable, there must be an entry below and to the right of $\sigma(i)$, and this entry has an inverse image above and to the left of the inverse image of $\sigma(i)$.  This creates an unsplittable interval, similar to that of Figure~\ref{figure:6_1-11}. Hence this case is not possible, completing the proof.		
\end{proof}

As in the previous section, we enumerate the $2341$-avoiding involutions by separately enumerating the sum decomposable permutations, the skew decomposable permutations, and the inflations of simple permutations of length at least four. Again we define $g$ to be the generating function for the set $\Av^I(2341)$ and $g_\oplus$ (resp., $g_\ominus$) the generating function for the sum (resp., skew) decomposable $2341$-avoiding involutions.

In this case we see that $\Av^I(2341)$ is sum closed (in the sense that the sum of two $2341$-avoiding involutions must also be a $2341$-avoiding involution) and so we have
	\[g_\oplus = (g - g_\oplus)g,\]
and hence
	\begin{equation}
	\label{eq-2341-1}
	g_\oplus = \f{g^2}{1+g}.
	\end{equation}

By Proposition~\ref{involution-decomp-2}, the skew decomposable permutations must have the form $321[\alpha_1,\alpha_2,\alpha_1^{-1}]$, where $\alpha_1$ is skew indecomposable and $\alpha_2$ is a (possibly empty) involution. Furthermore, to avoid the occurrence of a $2341$ or a $4123$ pattern, we must also have that $\alpha_1,\alpha_2\in \Av(123)$.

The $123$-avoiding permutations are enumerated by the Catalan numbers, which have generating function
	\[c(x) = \f{1-2x-\sqrt{1-4x}}{2x} = x + 2x^2 + 5x^3 + 14x^4 + \cdots.\]
Let $c_\ominus$ denote the generating function for the skew decomposable $123$-avoiding permutations. Since the class $\Av(123)$ is skew closed, it follows that
	\[c_\ominus = c(c-c_\ominus),\]
and thus
	\[c-c_\ominus = \f{c}{1+c} = x(c+1).\]

As mentioned in the introduction, Simion and Schmidt~\cite{simion:restricted-perm:} proved that
	\[
	|\Av^I_n(123)|={n\choose \lfloor n/2\rfloor},
	\]
the central binomial coefficients, \OEIS{A001405}. These permutations thus have the generating function
	\[\frac{1-4x^2-\sqrt{1-4x^2}}{4x^2-2x} = x + 2x^2 + 3x^3 + 6x^4 + \cdots.\]
	% ( 1-4*x^2-sqrt(1-4*x^2) ) / ( 4*x^2-2*x )
Therefore, the generating function which counts our choices for the pair $(\alpha_1,\alpha_1^{-1})$ is $x^2(c(x^2)+1)$, and the generating function for all skew decomposable $2341$-avoiding involutions is
	\begin{equation}
	\label{eq-2341-2}
	g_\ominus
	=
	\pa{x^2\pa{c(x^2)+1}}
	\cdot
	\pa{\frac{1-4x^2-\sqrt{1-4x^2}}{4x^2-2x}+1}
	\end{equation}

Next, we consider inflations of the simple permutations in $\Av^I(123)$. By considering several cases, it can be shown that every entry of such a simple permutation can only be inflated by a decreasing permutation, as any inflation by a permutation with a non-inversion would create a copy of $2341$ and $4123$. Thus inflations of the simple permutations counted by $\widehat{s}^{(0)}$ contribute
	\begin{equation}
	\label{eq-2341-4}
	\eval{\widehat{s}^{(0)}(u,v)}_{u^2=v^2=x^2/(1-x^2)},
	\end{equation}
inflations of the simple permutations counted by $\widehat{s}^{(1)}$ contribute
	\begin{equation}
	\label{eq-2341-5}
	2\pa{\eval{\f{\widehat{s}^{(1)}(u,v)}{v}}_{u^2=v^2=x^2/(1-x^2)}}
	\cdot
	\f{x}{1-x},
	\end{equation}
and inflations of simple permutations counted by $\widehat{s}^{(2)}$ contribute
	\begin{equation}
	\label{eq-2341-6}
	\pa{\eval{\f{\widehat{s}^{(2)}(u,v)}{uv}}_{u^2=v^2=x^2/(1-x^2)}}
	\cdot
	\pa{\f{x}{1-x}}^2.
	\end{equation}

Lastly, we consider inflations of $5274163$. Again, inflation of any entry by a permutation containing a non-inversion creates an occurrence of both $2341$ and $4123$. Because this permutation has three fixed points, the $2341$-avoiding involutions formed by inflations of $5274163$ are counted by
	\begin{equation}
	\label{eq-2341-3}
	\pa{\f{x^2}{1-x^2}}^2\pa{\f{x}{1-x}}^3.
	\end{equation}

Combining the contributions of \eqref{eq-2341-1}--\eqref{eq-2341-6} and accounting for the single permutation of length $1$, we are able to solve for $g$:

\[g = \frac{(x+1)^4(x-1)^{10}\sqrt{1-4x^2} - p(x)}{2q(x)},\]
where
\begin{align*}
	p(x) =& \;\;1 - 8x + 17x^2 + 24x^3 - 151x^4 + 162x^5 + 221x^6 - 624x^7 + 231x^8 + 684x^9 - 801x^{10}\\
			&\;\;- 60x^{11} + 627x^{12} - 334x^{13} - 101x^{14} + 158x^{15} - 48x^{16},
\end{align*}
and
\begin{align*}
	q(x) =& \;\;1 - 6x + 4x^2 + 50x^3 - 141x^4 + 55x^5 + 326x^6 - 514x^7 - 26x^8 + 725x^9 - 561x^{10}\\
			&\;\; - 223x^{11} + 540x^{12} - 206x^{13} - 113x^{14} + 120x^{15} - 32x^{16}.
\end{align*}
	
From this we find that the growth rate of $\Av^I(2341,4123)$ is the reciprocal of the smallest (and only) positive real root of $q(x)$, approximately $2.54$.
%\begin{align*}
%x^{16} - 6x^{15} + 4x^{14} + 50x^{13} - 141x^{12} + 55x^{11} + 326x^{10} - 514x^9 - 26x^8 + 725x^7\\
%\qquad - 561x^6 - 223x^5 + 540x^4 - 206x^3 - 113x^2 + 120x - 32.
%% x^(16) - 6*x^(15) + 4*x^(14) + 50*x^(13) - 141*x^(12) + 55*x^(11) + 326*x^(10) - 514*x^9 - 26*x^8 + 725*x^7 - 561*x^6 - 223*x^5 + 540*x^4 - 206*x^3 - 113*x^2 + 120*x - 32
%\end{align*}
%The growth rate itself is approximately $2.54$.

% Actual generating function:
%-1/2*(48*x^16-158*x^15+101*x^14+334*x^13-627*x^12+60*x^11+801*x^10-684*x^9-231*x^8+624*x^7-221*x^6-162*x^5+151*x^4-24*x^3-17*x^2+(x+1)^4*(x-1)^10*(-4*x^2+1)^(1/2)+8*x-1)/(32*x^16-120*x^15+113*x^14+206*x^13-540*x^12+223*x^11+561*x^10-725*x^9+26*x^8+514*x^7-326*x^6-55*x^5+141*x^4-50*x^3-4*x^2+6*x-1)

\section{Involutions Avoiding $1324$ Revisited}\label{section:1324-revisited}

As stated in the introduction, we were initially interested in pattern-avoiding involutions because we noticed that $\gr(\Av^I(1324))>\gr(\Av^I(1234))$, but that the numbers in Table~\ref{table-enum-1} did not obey this relationship. The ratio between $|\Av^I_n(1234)|$ and $|\Av^I_n(1324)|$ is plotted on the left of Figure~\ref{fig-1324-2413}. Here we see that for large enough $n$, this ratio does indeed go below $1$ (and of course we know that it goes to $0$).

\begin{figure}
\begin{footnotesize}
\begin{center}
\begin{tabular}{ccc}
	\begin{filecontents}{2413_invs.data}
0	1
1	1.0000
2	1.0000
3	1.0000
4	1.1111
5	1.1429
6	1.2549
7	1.3175
8	1.4206
9	1.5049
10	1.5991
11	1.6979
12	1.7840
13	1.8924
14	1.9713
15	2.0858
16	2.1580
17	2.2764
18	2.3424
19	2.4629
20	2.5232
21	2.6445
22	2.6993
23	2.8204
24	2.8699
25	2.9900
\end{filecontents}

\begin{filecontents}{1234_invs.data}
0	1
1	1
2	1
3	1
4	1
5	1
6	1
7	1.007936508
8	1.00623053
9	1.018292683
10	1.012962963
11	1.025468695
12	1.015649555
13	1.02642426
14	1.012059138
15	1.020038033
16	1.001529048
17	1.006319012
18	0.984282962
19	0.985873182
20	0.960995161
21	0.959583231
22	0.932535994
23	0.928425666
24	0.899829254
25	0.893370557
\end{filecontents}

\begin{tikzpicture}[y=1.667cm, x=.2cm,font=\sffamily,scale=.8]
 	%axis
	\draw (0,0) -- coordinate (x axis mid) (25,0);
    	\draw (0,0) -- coordinate (y axis mid) (0,3);
    	%ticks
    	\foreach \x in {0,1,...,25}
     		\draw (\x,1pt) -- (\x,-3pt)
			node[anchor=north] {};
	\foreach \x in {0,5,...,25}
     		\draw (\x,1pt) -- (\x,-3pt)
			node[anchor=north] {\x};
    	\foreach \y in {0,1,2,3}
     		\draw (1pt,\y) -- (-3pt,\y) 
     			node[anchor=east] {\y}; 
	%labels      
	\node[right=6pt] at (25,3) {$\displaystyle\frac{|\operatorname{Av}_n^I(2413)|}{|\operatorname{Av}_n^I(1324)|}$};
	\node[right=6pt] at (25,.9) {$\displaystyle\frac{|\operatorname{Av}_n^I(1234)|}{|\operatorname{Av}_n^I(1324)|}$};

	%plots
	\draw [gray] (0,1)--(25,1);
	\draw plot[mark=*, mark size=.05cm] file {2413_invs.data};  
	\draw plot[mark=*, mark size=.05cm] file {1234_invs.data};  
    
\end{tikzpicture}
&&
	\begin{filecontents}{1342_perms.data}
1	1
2	1
3	1
4	1
5	1
6	0.998050682
7	0.992034757
8	0.980497689
9	0.962743733
10	0.938697525
11	0.90873123
12	0.873513121
13	0.833887235
14	0.790781581
15	0.745140137
16	0.697874116
17	0.649828602
18	0.601761343
19	0.554330994
20	0.508092596
21	0.463498421
22	0.420902676
23	0.380568831
24	0.342678588
25	0.307341706
\end{filecontents}

\begin{filecontents}{1234_perms.data}
0	1
1	1
2	1
3	1
4	1
5	1
6	1
7	0.999637944
8	0.998353701
9	0.995600152
10	0.990945181
11	0.984095131
12	0.974883581
13	0.963249532
14	0.949214998
15	0.932865332
16	0.914332962
17	0.893784246
18	0.871408953
19	0.847411853
20	0.822005982
21	0.795407253
22	0.767830134
23	0.739484208
24	0.710571448
25	0.681284097
\end{filecontents}

\begin{tikzpicture}[y=5cm, x=.2cm,font=\sffamily,scale=.8]
 	%axis
	\draw (0,0) -- coordinate (x axis mid) (25,0);
    	\draw (0,0) -- coordinate (y axis mid) (0,1);
    	%ticks
    	\foreach \x in {0,1,...,25}
     		\draw (\x,1pt) -- (\x,-3pt)
			node[anchor=north] {};
	\foreach \x in {0,5,...,25}
     		\draw (\x,1pt) -- (\x,-3pt)
			node[anchor=north] {\x};
    	\foreach \y in {0,1}
     		\draw (1pt,\y) -- (-3pt,\y) 
     			node[anchor=east] {\y}; 
	%labels      
	\node[right=6pt] at (25,.7) {$\displaystyle\frac{|\operatorname{Av}_n(1234)|}{|\operatorname{Av}_n(1324)|}$};
	\node[right=6pt] at (25,.25) {$\displaystyle\frac{|\operatorname{Av}_n(2413)|}{|\operatorname{Av}_n(1324)|}$};

	%plots
	\draw [gray] (0,1)--(25,1);
	\draw plot[mark=*, mark size=.05cm] file {1234_perms.data};  
	\draw plot[mark=*, mark size=.05cm] file {1342_perms.data};  
    
\end{tikzpicture}
\end{tabular}
\end{center}
\end{footnotesize}
\caption{The number of $1324$-avoiding involutions and permutations compared to the number of $1234$- and $2413$-avoiding involutions and $1234$- and $1342$-avoiding permutations for $n=0$, $\dots$, $25$.} \label{fig-1324-2413}
\end{figure}
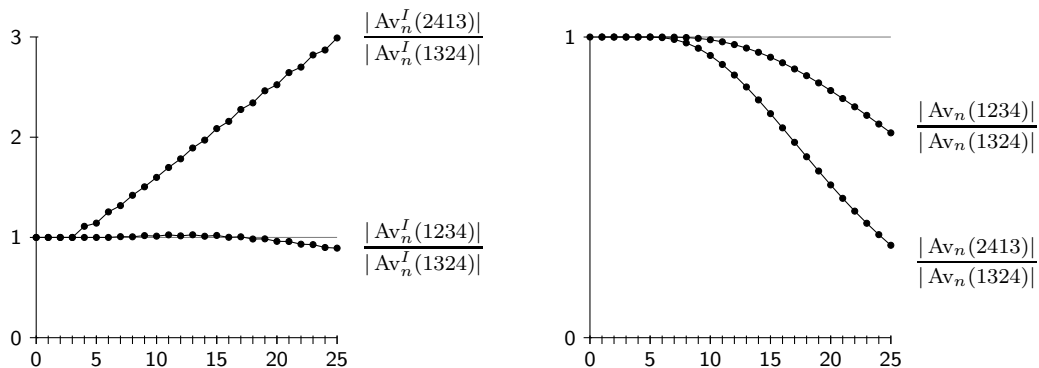

As observed in Section~\ref{section:grs}, the ratio between $|\Av^I_n(1234)|$ and $|\Av^I_n(1324)|$, which is also plotted on the left of Figure~\ref{fig-1324-2413}, should also go to $0$ (unless $\gr(\Av(1324))<9.9$, which would go against every bit of evidence we have about this class). However, as this plot demonstrates, the first $25$ terms of this ratio do not paint a very convincing picture of a sequence going to $0$. This is almost surely just an instance of the ``law of small numbers'', but it is interesting that the empirical data is so much worse for involutions than it is for permutations in general (as the analogous ratios shown on the right of Figure~\ref{fig-1324-2413} show).

For the rest of this section we adapt the technique of B\'ona~\cite{bona:a-new-upper-bou:} to derive an upper bound on the growth rate of $\Av^I(1324)$. B\'ona's technique was itself an improvement on the techniques of Claesson, Jel\'{\i}nek, and Steingr\'{\i}msson~\cite{claesson:upper-bounds-fo:}. They proved that every $1324$-avoiding permutation is the \emph{merge} of a $132$-avoiding permutation and a $213$-avoiding permutation. Here we say that $\pi$ is a merge of $\sigma$ and $\tau$ if the entries of $\pi$ can be partitioned into two subsequences such that one subsequence is order isomorphic to $\sigma$ while the other is order isomorphic to $\tau$. This gave an upper bound of $16$ on the growth rate of $\Av(1324)$.

Let $\pi \in \Av(1324)$. We color the entries of $\pi$ red or blue by the following algorithm. Proceeding from left to right, color an entry red only if it will not create a red $132$ pattern among the entries already colored. Otherwise, color it blue. The resulting coloring has the property that the red entries avoid $132$ and the blue entries avoid $213$. %Note also that all left-to-right minima are colored red. %Wait, this isn't true, since we might turn one blue later.

We now label each of the entries of $\pi$ by one of the four letters $\{\a,\b,\c,\d\}$ and use this to create two words, $e_\pi$ and $v_\pi$. A red entry is labeled $\a$ if it is a left-to-right minimum, and it is labeled $\b$ otherwise. Similarly, a blue entry is labeled $\d$ if it is a right-to-left maximum, and $\c$ otherwise. The $i$th letter of $e_\pi$ is then the label of $\pi(i)$ while the $i$th letter of $v_\pi$ is the label of the entry $i$ in $\pi$. B\'ona~\cite{bona:a-new-upper-bou:} proved that $\pi$ can be reconstructed from the pair $(e_\pi,v_\pi)$ and moreover, that neither $e_\pi$ nor $v_\pi$ can contain a $\c\b$ factor. Moreover, the generating function for words of length $n$ over the alphabet $\{\a,\b,\c,\d\}$ avoiding the factor $\c\b$ is
\[
	\frac{1}{1-4x+x^2},
\]
from which it follows that
\[
	\gr(\Av(1324))<\left(2+\sqrt{3}\right)^2=7+4\sqrt{3} < 13.93.
\]

Before adapting this technique to involutions, we alter the coloring algorithm slightly. Given a permutation $\pi \in \Av(1324)$, first color it as above. Then, change the color of all right-to-left maxima to blue. In order to show that the reconstruction given by B\'ona~\cite{bona:a-new-upper-bou:} still works, we must show that the blue entries still avoid $213$ (since we have not added any red entries, it is clear that the red entries still avoid $132$).

Assume to the contrary that there existed some right-to-left maximum $\pi(m)$ which was originally colored red, but is now part of a blue copy of $213$. Choose the leftmost such entry, and say that $\pi(a)$ is the `$1$' in the blue copy of $213$. Since we chose the leftmost $\pi(m)$, it must be true that $\pi(a)$ was chosen to be blue because otherwise it would be the `$2$' in a red copy of $132$. Let $\pi(x)$ and $\pi(y)$ be the entries that would have been the `$1$' and `$3$' (respectively) in such a red copy of $132$. If $\pi(y) < \pi(m)$, then the entries $\pi(x)\pi(y)\pi(a)\pi(m)$ form a $1324$ pattern, a contradiction. If $\pi(y) > \pi(m)$, then the entries $\pi(x)\pi(y)\pi(m)$ form a copy of $132$, which contradicts the assumption that $\pi(m)$ was red in the original coloring. Therefore, after all right-to-left maxima have been changed to blue, the red entries still avoid $132$ and the blue entries still avoid $213$. Using the same argument as B\'ona~\cite{bona:a-new-upper-bou:}, it can be shown that the map from $1324$-avoiding permutations to the pairs of words $(e_\pi,v_\pi)$ (which have changed due to the new coloring) is still injective.

We now restrict this map to $1324$-avoiding involutions. Recall that in an involution, the inverse image of a right-to-left maximum is also a right-to-left maximum, and the inverse image of a left-to-right minimum is also a left-to-right minimum. Hence, given the pair of words $(e_\pi, v_\pi)$ for a permutation $\pi \in \Av^I(1324)$, the words $e_\pi$ and $v_\pi$ have the letter $\a$ in the same positions and have the letter $\d$ in the same positions. This is a significant restriction which yields a much smaller upper bound for $\ugr(\Av^I(1324))$ than the bound for $\gr(\Av(1324))$.

Let $h(x)$ be the generating function for pairs of words $(e_\pi, v_\pi)$ over the alphabet $\{\a,\b,\c,\d\}$ such that neither $e_\pi$ nor $v_\pi$ contain a $\c\b$ factor and such that $e_\pi$ and $v_\pi$ have all $\a$ entries in identical positions and all $\d$ entries in identical positions. It is a simple exercise in automata theory (for which we refer to Flajolet and Sedgewick~{\cite[I.4.2]{flajolet:analytic-combin:}}) to prove that
	\[h(x) = \f{1+x}{1-5x+x^2-x^3}.\]

%Let $h(x)$ be the generating function for pairs of words $(e_\pi, v_\pi)$ over the alphabet $\{\a,\b,\c,\d\}$ such that neither $e_\pi$ nor $v_\pi$ contain a $\c\b$ factor and such that $e_\pi$ and $v_\pi$ have all $\a$ entries in identical positions and all $\d$ entries in identical positions. Then, $h(x)$ satisfies the recurrence
%	\[h(x) = 2xh(x)\ds\sum_{k=0}^\infty (k+1)^2x^k + 1.\]
%This is seen by considering the last occurrence of an $\a$ or $\d$ in a given pair of words. Suppose this lies in position $i+1$. Then, the pair of words $([e_\pi]_i, [v_\pi]_i)$ --- where $[w]_i$ denotes the truncation of the word $w$ to the first $i$ letters --- also satisfies the given conditions. By assumption, there are two possibilities for the $(i+1)$st entry (either $\a$ or $\d$), yielding the $2x$ in the recurrence. Lastly, any letters after the $(i+1)$st position of each word can only be $\b$ or $\c$. Given that the words avoid a $\c\b$ factor, the remaining portion of each word must have the form $\b\cdots\b\c\cdots\c$. Given that the pair of words must be the same length, there are $(k+1)^2$ ways to complete the pair of words, where $k$ is the number of letters to be added to the end of each word. Finally, the added $1$ accounts for the pair of empty words in the recurrence.

%Noting that
%	\[\ds\sum_{k=0}^\infty (k+1)^2x^k = \f{1+x}{(1-x)^3},\]
%we solve the recurrence for $h(x)$ to find that
%	\[h(x) = \f{(1-x)^3}{1-5x+x^2-x^3}.\]
% solve(f = f*2*x*(1+x)/(1-x)^3+1,f);
% f := 1/(-1+5*x-x^2+x^3)*(-1+x)^3
Letting $r = \sqrt[3]{8+6\sqrt{78}}$, the reciprocal of the smallest positive root of the denominator of $h(x)$ is $3r/(14+r-r^2)$. Therefore,
	\[\ugr(\Av^I(1324)) \leq \f{3r}{14+r-r^2} < 4.84.\]

\section{Concluding Remarks}

\begin{table}
\begin{footnotesize}
$$
\begin{array}{ccccccccc}%{|c|c|c|c|c|c|c|c|c|}
\hline
&&&&&&&&\\[-8pt]
\bm{\beta}&\bm{2413}&\bm{2431}&\bm{3421}&\bm{1342}&\bm{2341}&\bm{4231}&\bm{1324}&\bm{1234}\\[1pt]\hline
&&&&&&&&\\[-9pt]
n=5&0&1&2&2&2&2&2&2
\\[1pt]\hline
&&&&&&&&\\[-9pt]
n=6&0&1&2&3&3&2&4&4
\\[1pt]\hline
&&&&&&&&\\[-9pt]
n=7&0&2&3&2&3&5&9&10
\\[1pt]\hline
&&&&&&&&\\[-9pt]
n=8&0&2&5&5&5&11&17&35
\\[1pt]\hline
&&&&&&&&\\[-9pt]
n=9&0&6&7&10&10&30&52&101
\\[1pt]\hline
&&&&&&&&\\[-9pt]
n=10&0&6&13&17&17&62&106&261
\\[1pt]\hline
&&&&&&&&\\[-9pt]
n=11&0&16&19&22&22&162&292&727
\\[1pt]\hline
&&&&&&&&\\[-9pt]
n=12&0&16&31&44&44&377&635&1865
\\[1pt]\hline
&&&&&&&&\\[-9pt]
n=13&0&45&51&68&68&973&1753&5127
\\[1pt]\hline
&&&&&&&&\\[-9pt]
n=14&0&45&82&127&127&2378&3954&13045
\\[1pt]\hline
&&&&&&&&\\[-9pt]
n=15&0&126&135&184&184&6116&10824&35735
\\[1pt]\hline
\end{array}
$$
\end{footnotesize}
\caption{The number of simple $\beta$-avoiding involutions of length $n$ for $n=5$, $\dots$, $15$, with columns sorted according to the $n=15$ data.}
\label{table-simple-enum}
\end{table}

In many ways this paper represents an initial foray into the topic of pattern-avoiding involutions, which has been considered very little in the past. It is natural to ask if the substitution decomposition might be used to enumerate any other sets of the form $\Av^I(\beta)$ for $|\beta|=4$. Table~\ref{table-simple-enum} shows that numbers of simple $\beta$-avoiding involutions of lengths $n=5$, $\dots$, $15$ (again computed with PermLab~\cite{PermLab1.0}). One may expect sets with fewer simple permutations to be easier to understand. Therefore this data suggests that it might be fruitful to apply our techniques to enumerate the $2431$- or $3421$-avoiding involutions. However, counting the $4231$- or $1324$-avoiding involutions using the substitution decomposition appears to be much less promising.

\bigskip
\noindent{\bf Acknowledgments:} We are very grateful to Michael Albert for adding support for involutions to his PermLab package~\cite{PermLab1.0}.

\bibliographystyle{acm}
\bibliography{../../refs}

\end{document}